\numberwithin{equation}{section}
\numberwithin{figure}{section}
\theoremstyle{plain}
\newtheorem{thm}[equation]{\protect\theoremname}
\theoremstyle{definition}
\theoremstyle{definition}
\newtheorem{defn}[equation]{\protect\definitionname}
\theoremstyle{definition}
\newtheorem{notation}[equation]{\protect\notationname}
\theoremstyle{remark}
\newtheorem{rem}[equation]{\protect\remarkname}
\theoremstyle{plain}
\newtheorem{prop}[equation]{\protect\propositionname}
\theoremstyle{plain}
\newtheorem{cor}[equation]{\protect\corollaryname}
\theoremstyle{plain}
\newtheorem*{conj}{\protect\conjecturename}
\providecommand{\conventionname}{Convention}
\providecommand{\conjecturename}{Conjecture}
\providecommand{\corollaryname}{Corollary}
\providecommand{\definitionname}{Definition}
\providecommand{\notationname}{Notation}
\providecommand{\propositionname}{Proposition}
\providecommand{\remarkname}{Remark}
\providecommand{\theoremname}{Theorem}
\newcommand{\notehelper}[3]{\textcolor{#3}{$\blacksquare$}\marginpar{\ifodd\thepage\raggedright\else\raggedleft\fi\color{#3}\tiny \textbf{#2:} #1}}
\theoremstyle{plain}
\newtheorem{customthm}{Theorem}
\theoremstyle{plain}
\newtheorem{customcon}{Convention}
\newcommand{\pr}[1]{\left(#1\right)}
\newcommand{\curlyCatinfty}{\mathcal{C}\mathsf{at}_{\infty}}
\newcommand{\curlyOpinfty}{\mathcal{O}\mathsf{p}_{\infty}}
\newcommand{\curlyRelCatinfty}{\mathcal{R}\mathsf{el}\mathcal{C}\mathsf{at}_{\infty}}
\newcommand{\curlyRelOpinfty}{\mathcal{R}\mathsf{el}\mathcal{O}\mathsf{p}_{\infty}}
\newcommand{\RelCatinfty}{\mathsf{RelCat}_{\infty}}
\newcommand{\RelOpinfty}{\mathsf{RelOp}_{\infty}}
\newcommand{\Catinfty}{\mathsf{Cat}_{\infty}}
\newcommand{\Opinfty}{\mathsf{Op}_{\infty}}
\newcommand{\QCat}{\mathsf{QCat}}
\newcommand{\Set}{\mathsf{Set}}
\newcommand{\sSet}{\mathsf{sSet}}
\newcommand{\dSet}{\mathsf{dSet}}
\newcommand{\CSS}{\mathsf{CSS}}
\newcommand{\Cat}{\mathsf{Cat}}
\newcommand{\DQOp}{\mathsf{DQOp}}
\newcommand{\LQOp}{\mathsf{LQOp}}
\newcommand{\CDSS}{\mathsf{CDSS}}
\newcommand{\Rel}{\mathsf{Rel}}
\newcommand{\Op}{\mathsf{Op}}
\newcommand{\RelOp}{\mathsf{RelOp}}
\newcommand{\Lurie}{\mathrm{Lurie}}
\newcommand{\Rezk}{\mathrm{Rezk}}
\newcommand{\hc}{\mathrm{hc}}
\newcommand{\id}{\mathrm{id}}
\newcommand{\Kan}{\mathrm{Kan}}
\newcommand{\Hom}{\operatorname{Hom}}
\newcommand{\Fun}{\operatorname{Fun}}
\newcommand{\Map}{\operatorname{Map}}
\newcommand{\qAlg}{\operatorname{Alg}}
\newcommand{\Alg}{\mathsf{Alg}}
\newcommand{\Del}{\mathbf{\Delta}}
\newcommand{\Den}{\mathbf{\Omega}}
\newcommand{\op}{\mathrm{op}}
\newcommand{\from}{\colon}
\begin{document}
\title{Relative operads model $\infty$-operads}
\author{K.\ Arakawa, V.\ Carmona, and F.\ Pratali}
\keywords{relative $\infty$-operads, localization, higher algebra, operadic nerve}
\subjclass{18Nxx, 18N55, 18N70, 18M60, 18M75, 55U35}
\begin{abstract}
Given a (colored) operad and a set of unary operations, we can
form an associated $\infty$-operad via localization.
We show that localization determines an equivalence of homotopy
theories of relative operads and $\infty$-operads. As an application,
we give an affirmative answer to an open question by
Harpaz, proving that Lurie's operadic nerve functor determines
an equivalence of homotopy theories of simplicial operads and
Lurie's $\infty$-operads.
\end{abstract}

\maketitle


\section*{Introduction}
\textbf{Relative categories} are a remarkably simple yet powerful
model of $\pr{\infty,1}$-categories, or $\infty$-categories
for short. A relative category is a pair $\pr{\mathcal{C},W}$,
where $\mathcal{C}$ is a category and $W\subseteq \mathcal{C}$ is a replete subcategory, whose morphisms are called weak
equivalences.
\footnote{In the original work of Barwick and Kan \cite{BK12a, BK12b}, $W$ is only required to contain all objects.
We modify the definition for this paper because we do not use wideness anywhere, and we can only speak of replete subcategories when working invariantly with $\infty$-categories. 
The modification does not affect the homotopy theory of relative categories.}

The definition formalizes the common situation in mathematics
where a category has non-invertible morphisms that ``behave like'' isomorphisms. To treat them as such, 
one freely inverts maps in $W$ to obtain an $\infty$-category
$\mathcal{C}[W^{-1}]$, called the \textbf{localization} of $\mathcal{C}$ at $W$. From
the early days of homotopy theory, it has been observed that
many $\infty$-categories can be presented as a localization
of a relative category \cite{Quillen_HA, DK80_1, DK80_2, DK80_3}.
Such presentations are extremely useful, as they allow various
$\infty$-categorical constructions to be performed at the level
of ordinary categories, where they are considerably more tractable.

It is natural to ask if such a presentation always exists. Joyal's
\textit{delocalization theorem}, first stated by Joyal \cite[\S 13.6]{J:NQC} and then proven by Stevenson \cite[Theorem 1.3]{Ste:CMSSL}, 
gives an affirmative answer to this question, saying that \textit{every}
$\infty$-category is a localization of a relative category.
In fact, by the work of Barwick--Kan \cite{BK12a, BK12b}, we know that 
localization determines an equivalence of $\infty$-categories
\[
\mathsf{RelCat}[\mathrm{loc.eq}^{-1}]\xrightarrow{\sim}\curlyCatinfty,
\]
where $\mathrm{loc.eq}$ denotes the subcategory of \textit{local
equivalences}, i.e., maps of relative categories inducing an equivalence
between localizations.\\

Localizing categories is a well-studied procedure, but there is a growing interest in extending localization techniques to other category-like structures that encode additional algebraic information, such as \textbf{operads} (also known as symmetric multicategories). Operads can be viewed as a generalization of categories in which morphisms are allowed to have multiple inputs, thereby capturing multiplicative or compositional structures beyond those present in ordinary categories. If we have
an operad, we can freely invert a collection of its unary arrows
to obtain an $\infty$-operad, called its \textbf{localization}. More precisely, defining a \textbf{relative operad} as
a pair $\pr{\mathcal{O},S}$, where $\mathcal{O}$ is an operad
and $S$ is a replete subcategory of the category of unary operations $U(\mathcal{O})$ in $\mathcal{O}$,  its \textbf{localization}
is the $\infty$-operad $\mathcal{O}[S^{-1}]$ obtained by
freely inverting the operations in $S$.

This construction has gained some attention in recent times from both a theoretical perspective (e.g., \cite{BBPTY18, Pra25}) and a practical perspective, especially in the context of \textit{factorization algebras} and \textit{algebraic quantum field theories}
in mathematical physics, where it is used to handle topological and dynamical axioms (e.g., \cite{benini_equivalence_2024,benini_cast-categorical_2025,benini_strictification_2023,benini_operads_2025,calaque_not_2024, CC_notlittle, carmona_additivity_2025, HA,karlsson_assembly_2025}). 

In \cite{Pra25}, the third author extended Joyal's delocalization
theorem to the operadic setting, showing that every $\infty$-operad
is a localization of an ordinary (discrete) operad. 
In this paper, we use this result to
obtain an equivalence of homotopy theories, thereby
establishing an operadic version of Barwick--Kan's theorem: 

\begin{customthm}
[Theorem \ref{thm:main}] \label{thm:main_intro} Localization of relative operads
determines an equivalence of $\infty$-categories
\[
\RelOp[\mathrm{loc.eq}^{-1}]\xrightarrow{\sim}\curlyOpinfty.
\]
\end{customthm}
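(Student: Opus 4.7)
The plan is to factor the localization functor $L\colon \RelOp \to \curlyOpinfty$ through the universal arrow to the localization $\RelOp[\mathrm{loc.eq}^{-1}]$. Since $L$ sends local equivalences to equivalences by definition, this factorization is essentially unique and yields a comparison $\bar{L}\colon \RelOp[\mathrm{loc.eq}^{-1}] \to \curlyOpinfty$; the theorem reduces to showing that $\bar{L}$ is both essentially surjective and fully faithful. Essential surjectivity follows at once from the operadic delocalization theorem of \cite{Pra25}: every $\infty$-operad $\mathcal{P}$ can be presented as $\mathcal{O}[S^{-1}]$ for some discrete relative operad $(\mathcal{O},S)$, and hence lies in the image of $\bar{L}$.

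The substantive task is full faithfulness. My strategy would be to exhibit $\bar{L}$ as the left adjoint in an adjunction
\[
\bar{L}\colon \RelOp[\mathrm{loc.eq}^{-1}] \rightleftarrows \curlyOpinfty \colon R,
\]
where $R$ sends an $\infty$-operad $\mathcal{P}$ to a coherently chosen strict model $(\mathcal{O}_{\mathcal{P}}, S_{\mathcal{P}})$ whose localization recovers $\mathcal{P}$---for example, an operadic version of a Dwyer--Kan/hammock resolution, or a dendroidal Segal resolution. One then checks that the counit $\bar{L}R \Rightarrow \id$ is an equivalence (essentially the statement that the resolution genuinely presents $\mathcal{P}$) and that the unit $\id \Rightarrow R\bar{L}$ is a local equivalence on every relative operad $(\mathcal{O},S)$. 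An alternative, closer in spirit to the original Barwick--Kan argument, would be to produce a model structure on $\RelOp$ whose weak equivalences are the local equivalences and to connect it by a zigzag of Quillen equivalences to a known model for $\curlyOpinfty$ (such as dendroidal complete Segal spaces, or Lurie's operadic quasicategories).

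I expect the main obstacle to lie in controlling mapping spaces in $\RelOp[\mathrm{loc.eq}^{-1}]$, which is what full faithfulness ultimately amounts to. Delocalization produces a resolution of each $\infty$-operad one object at a time, but for an adjoint construction one needs these resolutions to depend functorially and coherently on the input; this typically requires either a rigid combinatorial model (for instance a dendroidal one) or a careful operadic version of the Dwyer--Kan hammock construction. Once such a coherent resolution is available, the remaining checks that the unit and counit are local equivalences should follow from the universal property of localization together with the structural properties of the chosen resolution.
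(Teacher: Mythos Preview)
Your outline is pointed in the right direction, but it both overestimates the difficulty and misses the clean reduction the paper uses. First, the ``main obstacle'' you flag---making the delocalization functorial---is already solved in \cite{Pra25}: the construction $\mathcal{O}\mapsto \Den/\mathcal{O}$ together with the root functor $\rho_{\mathcal{O}}$ is a strict functor of ordinary categories, and precomposing with a functorial cofibrant replacement in the operadic model structure on dendroidal sets yields an honest $1$-functor $\Phi\colon \RelOpinfty\to\RelOp$, $(\mathcal{O},S)\mapsto(\Den/\mathcal{O}^c,\rho_{\mathcal{O}^c}^{-1}(S^c))$. No hammock construction or new model structure is needed.

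Second, and more importantly, the paper does \emph{not} attack full faithfulness of $\bar L$ directly via an adjoint on $\curlyOpinfty$. Instead it inserts the intermediate relative category $\RelOpinfty$ of concrete relative $\infty$-operads. The equivalence $\RelOpinfty[\mathrm{loc.eq}^{-1}]\simeq\curlyOpinfty$ is essentially formal (Propositions~\ref{prop:deloc_op} and~\ref{prop:opr_loc}, i.e.\ a mapping-space computation plus the reflective adjunction $L\dashv\iota$), so the theorem reduces to showing that the inclusion $\iota\colon\RelOp\hookrightarrow\RelOpinfty$ becomes an equivalence after inverting local equivalences. This is now a statement about ordinary categories: one exhibits $\Phi$ as above and checks that the root functor supplies natural local equivalences $\iota\Phi\Rightarrow\id$ and $\Phi\iota\Rightarrow\id$. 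All coherence issues evaporate because the zig-zag of natural transformations lives at the $1$-categorical level and is only inverted \emph{after} passing to the localization. Your proposal to build an $\infty$-categorical right adjoint $R$ and verify unit/counit conditions would work in principle, but it forces you to manage coherence that the paper's factorization through $\RelOpinfty$ sidesteps entirely.
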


As an application of Theorem \ref{thm:main_intro}, we solve
an open problem raised by Harpaz on MathOverflow \cite{MO249973}.
The question is whether Lurie's\textit{ operadic nerve functor}, $N_{\Lurie}\from\Op_{\Kan}\to\LQOp$ from locally Kan simplicial operads
to Lurie quasioperads, determines an equivalence between the associated $\infty$-categories. 

While the equivalence is expected to be ``very likely'' for the reason that it is ``quite a natural
construction'' between two models of $\infty$-operads \cite{MO249973}, the comparison has proved to be difficult because
model-categorical techniques do not directly apply. (The operadic
nerve functor is neither left nor right Quillen.)

However, with Theorem \ref{thm:main_intro} in hand, we can give a clear proof of the full conjecture, confirming Harpaz's expectation:
\begin{customthm}
[Theorem \ref{thm:Harpaz}] \label{thm:Harpaz_intro} The operadic
nerve functor determines an equivalence of $\infty$-categories  $N_{\Lurie}\from\Op_{\Kan}[\mathrm{weq}^{-1}]\xrightarrow{\sim}\LQOp[\mathrm{weq}^{-1}]$.
\end{customthm}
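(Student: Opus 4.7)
The plan is to use Theorem A as a bridge between the two models. Since $\RelOp[\mathrm{loc.eq}^{-1}]$ is already equivalent to $\curlyOpinfty$, it suffices to produce two functors
\[
L_{\Kan}\from \RelOp \to \Op_{\Kan}, \qquad L_{\Lurie}\from \RelOp \to \LQOp
\]
that send local equivalences to weak equivalences, descend to equivalences on localized $\infty$-categories, and fit into a square commuting (up to natural weak equivalence) when $N_{\Lurie}$ is applied along one side. Given such data, Theorem A and two-out-of-three force $N_{\Lurie}$ to be an equivalence.

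First I would define $L_{\Kan}$ as an operad-enriched Dwyer--Kan/hammock-style localization: given a relative operad $(\mathcal{O}, S)$, produce the locally Kan simplicial operad that freely inverts the marked unary arrows, realized for instance as a fibrant replacement of the pushout inverting $S$ in the Cisinski--Moerdijk model structure on simplicial operads. Similarly, $L_{\Lurie}$ is Lurie's $\infty$-operadic localization of $N_{\Lurie}(\mathcal{O})$ at the image of $S$. That each sends $\mathrm{loc.eq}$ to $\mathrm{weq}$ is built into the construction. Essential surjectivity of each induced functor on localizations then follows from Theorem A: any Lurie quasioperad (resp.\ locally Kan simplicial operad) represents some $\infty$-operad $\mathcal{P}$, and Theorem A presents $\mathcal{P}$ as $\mathcal{O}[S^{-1}]$ for some discrete relative operad, whose image under $L_{\Lurie}$ (resp.\ $L_{\Kan}$) must then be weakly equivalent to the original. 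Combined with the known fact that both $\Op_{\Kan}[\mathrm{weq}^{-1}]$ and $\LQOp[\mathrm{weq}^{-1}]$ are equivalent to $\curlyOpinfty$, this upgrades to full equivalences.

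The key step, and the main obstacle, is the commutativity of the square: for every relative operad $(\mathcal{O}, S)$, one must produce a natural weak equivalence
\[
N_{\Lurie}(L_{\Kan}(\mathcal{O}, S)) \xrightarrow{\ \sim\ } L_{\Lurie}(\mathcal{O}, S) \quad \text{in } \LQOp.
\]
This is precisely where model-categorical techniques have historically failed, since $N_{\Lurie}$ is neither left nor right Quillen. My approach would be to exhibit both sides as realizing the same universal property --- freely inverting the image of $S$ in $\LQOp$ under $N_{\Lurie}(\mathcal{O})$. For the underlying $\infty$-categories of unary operations, the compatibility reduces to the classical comparison between the homotopy-coherent nerve $N_{\hc}$ and the $\infty$-categorical localization of simplicial categories. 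The multi-arrow case should then follow via the Segal conditions defining $\LQOp$, which reduce multi-mapping spaces to products of unary ones and spaces of colors.

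Putting the pieces together: the functors $L_{\Kan}$ and $L_{\Lurie}$ each induce equivalences $\RelOp[\mathrm{loc.eq}^{-1}]\xrightarrow{\sim}\Op_{\Kan}[\mathrm{weq}^{-1}]$ and $\RelOp[\mathrm{loc.eq}^{-1}]\xrightarrow{\sim}\LQOp[\mathrm{weq}^{-1}]$ by Theorem A, and the square commutes by the universal-property argument. Hence $N_{\Lurie}\from\Op_{\Kan}[\mathrm{weq}^{-1}] \to \LQOp[\mathrm{weq}^{-1}]$ is the desired equivalence, confirming Harpaz's expectation.
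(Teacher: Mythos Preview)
Your overall architecture is close to the paper's: both route through $\RelOp[\mathrm{loc.eq}^{-1}]$ via Theorem~A and reduce everything to one compatibility statement. The paper packages this slightly differently---rather than building explicit localization functors $L_{\Kan}$, $L_{\Lurie}$, it lifts $N_{\Lurie}$ to a functor $\overline{N}_{\Lurie}\from \Rel\Op_{\Kan}\to\Rel\LQOp$ and shows that this preserves local equivalences (their property (ON3)). Your commuting-square requirement $N_{\Lurie}(L_{\Kan}(\mathcal{O},S))\simeq L_{\Lurie}(\mathcal{O},S)$ and their (ON3) are essentially the same content, so the architectures agree up to cosmetics.

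The genuine gap is in your resolution of that key step. You reduce it to showing that $N_{\Lurie}$ carries a simplicial-operad localization to a Lurie-quasioperad localization, and then propose to deduce the multi-arrow case from the unary one ``via the Segal conditions.'' This does not work: the Segal conditions describe the fibers $\mathcal{O}^{\otimes}_{\langle n\rangle}$ in terms of $\mathcal{O}^{\otimes}_{\langle 1\rangle}$, but they say nothing about the active-map structure encoding operadic composition; many inequivalent $\infty$-operads share the same underlying $\infty$-category, so knowing $U(\gamma)$ is a categorical localization gives no handle on the universal property of $\gamma$ in $\curlyOpinfty$. The paper supplies exactly the missing input here, and it is substantial: it characterizes operadic localization by a Morita-type criterion (essential surjectivity on colors plus an equivalence $\qAlg_{\mathcal{P}}(\mathcal{S})\xrightarrow{\sim}\qAlg^{S}_{\mathcal{O}}(\mathcal{S})$), proves the analogous statement model-categorically for simplicial operads via $\mathscr{L}_{S}\Alg_{\mathcal{O}}(\sSet)$, and bridges the two with the rectification theorem for algebras over $\infty$-operads (Pavlov--Scholbach, White--Yau). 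Without some ingredient of this strength, your commutativity claim is a restatement of the problem rather than a proof.
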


Roughly speaking, Theorem \ref{thm:main_intro}
facilitates the comparison because relative operads and their
localization can be interpreted in any reasonable model of $\infty$-operads.
As such, we expect that our method extends to the comparison
of various other models of $\infty$-operads.

\subsection*{\label{subsec:relation}Relation with other works}
Whereas Barwick--Kan's equivalence is established on the level of model categories, our equivalence for operads is formulated at the level of relative categories. While it should be possible in principle to extend Barwick–Kan’s model categories to the operadic setting, the added technical complexity offers little benefit, so we do not pursue this here. \\

A partial positive answer to Harpaz's conjecture was given, even before its formulation, by Heuts-Hinich-Moerdijk in \cite[Proposition 6.1.1]{HHM:ELMDMIO}, which establishes the equivalence for operads without units (i.e.\ \emph{open} operads). Theorem \ref{thm:Harpaz_intro} recovers and extends the equivalence in \emph{loc.\ cit.}. That paper also appears to be the first to formulate Lurie's operadic nerve as a functor between relative categories---a perspective we likewise adopt. Beyond this, the approaches differ: the equivalence for open operads in \emph{loc.\ cit.} is obtained via a zig-zag of Quillen equivalences, whereas our proof relies directly on Theorem \ref{thm:main_intro} and is perhaps more conceptual.

\subsection*{\label{subsec:organization}Organization of the paper}

This paper consists of three sections. In Section \ref{sec:prelims},
we recall basic definitions and facts on $\infty$-operads and
$\infty$-categories. In Section \ref{sec:main}, we prove Theorem
\ref{thm:main_intro} after a brief discussion of relative $\infty$-operads
and their localization. In Section \ref{sec:appl}, we characterize localization of simplicial and Lurie's $\infty$-operads by a Morita-type universal property (Propositions \ref{prop:char_loc_Kan} and \ref{prop:char_loc_Lurie}) and prove
Theorem \ref{thm:Harpaz_intro}.

We also adopt the following convention:
\begin{customcon}
\label{conv:oo-cats}Throughout the paper, we make extensive
use of \textbf{$\infty$-categories}. In particular, unless stated
otherwise, various categorical constructions (colimits, localizations,
etc.) are performed in the realm of $\infty$-categories. For
the most part, readers can ignore which model of $\infty$-categories
we use. But when pressed, we mean quasicategories in the sense
of Joyal \cite{Joyal_qcat_Kan}, which is the most well-developed
model of $\pr{\infty,1}$-categories. We will not notationally
distinguish between categories and their nerves, and we use the
term ``\textbf{$\infty$-groupoid}'' as a synonym of ``Kan
complex''. 
\end{customcon}

\section{\label{sec:prelims}Preliminaries}

In this section, we recall some basic definitions and facts about
$\infty$-operads and $\infty$-categories.

\subsection{\label{subsec:models_oo-cats}Models of $\infty$-categories}

There are many equivalent models of $\infty$-categories. For
the purpose of this paper, we will be concerned with the following
three models:
\begin{itemize}
\item The category $\QCat$ of \textbf{quasicategories}, which are
fibrant objects of the Joyal model structure on the category
of simplicial sets \cite[Theorem 2.2.5.1]{HTT}. 
\item The category $\CSS$ of \textbf{complete Segal spaces}, which
are fibrant objects of the complete Segal space model structure
on the category of simplicial simplicial sets (i.e., bisimplicial
sets) \cite[Theorem 7.2]{Rezk01}. 
\item The category $\Cat_{\Kan}$ of \textbf{locally Kan simplicial
categories}, which are fibrant objects of the Bergner model structure
on simplicial categories \cite{Ber07}.
\end{itemize}

We will think of these categories as relative categories whose weak equivalences are the weak equivalences of the respective model categories.

These relative categories are related by the
functors 
\[
\QCat\xrightarrow{N_{\Rezk}}\mathsf{CSS}\xrightarrow{q}\QCat\xleftarrow{N_{\hc}}\Cat_{\Kan},
\]
where:
\begin{itemize}
\item $N_{\Rezk}$ denotes the \textbf{Rezk nerve }functor, given by
\[
N_{\Rezk}\pr{\mathcal{C}}=\Map\pr{\Delta^{\bullet},\mathcal{C}}.
\]
Here $\Map$ denotes the maximal sub Kan complex of the internal
hom of simplicial sets.
\item $q$ denotes the restriction of the functor $\mathrm{ev}_{0}^{\Del^{\op}}\from\sSet^{\Del^{\op}}\to\mathsf{Set}^{\Del^{\op}}$.
\item $N_{\hc}$ denotes the \textbf{homotopy coherent nerve} functor
\cite{Cordier_hcnerve}.
\end{itemize}
The following theorem asserts that these functors induce equivalences
of homotopy theories:
\begin{thm}
\label{thm:models_oo-cats}The functors
\[
\QCat\xrightarrow{N_{\Rezk}}\CSS\xrightarrow{q}\QCat\xleftarrow{N_{\hc}}\Cat_{\Kan}
\]
induce equivalences of $\infty$-categories upon localizing at
weak equivalences.
\end{thm}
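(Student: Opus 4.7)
The plan is to leverage the well-known chain of Quillen equivalences relating the three models of $\infty$-categories in play.

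For $N_{\hc}$, I would invoke Lurie's theorem \cite[Theorem 2.2.5.1]{HTT}: the homotopy coherent nerve is the right adjoint of a Quillen equivalence between the Bergner model structure on simplicial categories and the Joyal model structure on $\sSet$. Since any Quillen equivalence induces an equivalence of the underlying $\infty$-categories (the localizations at weak equivalences), this immediately yields $N_{\hc}\from \Cat_{\Kan}[\mathrm{weq}^{-1}] \xrightarrow{\sim} \QCat[\mathrm{weq}^{-1}]$.

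For $q$, observe that $q$ agrees on the nose with the functor $i_{1}^{\ast}$ of Joyal--Tierney, namely $Y\mapsto Y_{\bullet,0}$, obtained by restriction along the inclusion $[m]\mapsto ([m],[0])$. By Joyal--Tierney's main theorem, $i_{1}^{\ast}$ is the right adjoint of a Quillen equivalence between Rezk's CSS model structure on bisimplicial sets and the Joyal model structure on $\sSet$, so $q$ descends to an equivalence on localizations. For $N_{\Rezk}$, I would first check that it preserves weak equivalences: any Joyal equivalence $\mathcal{C}\to \mathcal{D}$ of quasicategories induces Kan equivalences $\Map(\Delta^{n}, \mathcal{C})\to \Map(\Delta^{n}, \mathcal{D})$ for each $n$, and hence a levelwise---and in particular CSS---equivalence. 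A direct computation then yields $q\circ N_{\Rezk} = \id_{\QCat}$: indeed, the $0$-simplices of $\Map(\Delta^{n}, \mathcal{C})$ (the maximal sub Kan complex of $\Fun(\Delta^{n}, \mathcal{C})$) coincide with those of $\Fun(\Delta^{n}, \mathcal{C})$, which are precisely the $n$-simplices of $\mathcal{C}$. Hence the functor induced by $N_{\Rezk}$ on localizations is a strict section of an equivalence, and therefore is itself an equivalence.

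The main subtlety in this approach is in locating the statement one needs from Joyal--Tierney in exactly the form required here---their paper establishes several closely related Quillen equivalences between bisimplicial sets and simplicial sets---and in checking that $q$ matches (on the nose, or at least up to a natural weak equivalence) the right adjoint of one of them. Beyond this, and modulo the standard fact that a Quillen equivalence induces an equivalence on underlying $\infty$-categories, the argument is essentially formal.
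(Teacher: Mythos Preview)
Your proposal is correct and essentially matches the paper's proof, which simply cites \cite[Theorem 4.11]{JT07} for $q$, \cite[Proposition 1.20 and Theorem 4.12]{JT07} for $N_{\Rezk}$, and \cite[Theorem 2.2.5.1]{HTT} for $N_{\hc}$. The only minor difference is your treatment of $N_{\Rezk}$: rather than appealing directly to the second Joyal--Tierney Quillen equivalence, you observe $q\circ N_{\Rezk}=\id_{\QCat}$ and use two-out-of-three, which is a perfectly valid (and arguably cleaner) way to reach the same conclusion from the same underlying references.
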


\begin{proof}
The claim on $q$ is proved in \cite[Theorem 4.11]{JT07}, and
the claim on $N_{\Rezk}$ follows from \cite[Proposition 1.20 and Theorem 4.12]{JT07}.
The claim on $N_{\hc}$ is proved in \cite[Theorem 2.2.5.1]{HTT}.
\end{proof}
Because of Theorem \ref{thm:models_oo-cats}, we make the following
definition:
\begin{defn}
\label{def:oo-cats}We will write $\mathsf{Cat}_{\infty}$ for
any one of the relative categories $\QCat$, $\CSS$, or $\Cat_{\Kan}$,
and refer to its objects as \textbf{concrete $\infty$-categories}.
We write $\curlyCatinfty$ for the localization $\mathsf{Cat}_{\infty}[\mathrm{weq}^{-1}]$
and call its objects \textbf{$\infty$-categories}.
\end{defn}

\begin{rem}
Definition \ref{def:oo-cats} does not conflict Convention \ref{conv:oo-cats},
because we can choose a localization functor $\QCat\to\QCat[\mathrm{weq}^{-1}]$
that does not change the collection of objects.
\end{rem}

\subsection{\label{subsec:models_oo-opds}Models of $\infty$-Operads}

Each of the models of $\infty$-categories we discussed in Subsection
\ref{subsec:models_oo-cats} has operadic generalizations:
\begin{itemize}
\item The category $\DQOp$ of \textbf{dendroidal quasioperads}, which
are fibrant objects of the operadic model structure on the category
of dendroidal sets \cite[Theorem 2.4]{CM11}.
\item The category $\CDSS$ of \textbf{complete dendroidal Segal spaces},
which are fibrant objects of the operadic model structure on
the category of dendroidal simplicial sets \cite[Definition 6.2]{CM13a}.
\item The category $\Op_{\Kan}$ of \textbf{locally Kan simplicial
operads}, which are the fibrant objects of the Dwyer--Kan model
structure on the category of simplicial operads \cite[Theorem 1.14]{CM13b}.
\end{itemize}
There is yet another model for $\infty$-operads, perhaps the most developed so far, which \emph{builds on}, instead of \emph{generalizing}, the theory of $\infty$-categories:
\begin{itemize}
\item The category $\LQOp$ of \textbf{Lurie quasioperads}, which are
the fibrant objects of the preoperadic model structure on the category of $\infty$-preoperads \cite[Proposition 2.1.4.6]{HA}.
\end{itemize}

Again, we will see each of these models as a relative category, where weak equivalences are the weak equivalences in the respective model category. These categories and the previously discussed
models of $\infty$-categories fit into the following diagram,
which commutes up to natural weak equivalence:
\begin{equation}\label{d:models}
\begin{tikzcd}
	{\mathsf{LQOp}} & {\mathsf{CDSS}} & {\mathsf{DQOp}} & {\mathsf{Op}_{\mathrm{Kan}}} \\
	{\mathsf{QCat}} & {\mathsf{CSS}} & {\mathsf{QCat}} & {\mathsf{Cat}_{\mathrm{Kan}}.}
	\arrow["\delta", from=1-1, to=1-2]
	\arrow["{U_1}"', from=1-1, to=2-1]
	\arrow["{(1)}"{description}, draw=none, from=1-1, to=2-2]
	\arrow["q", from=1-2, to=1-3]
	\arrow["{U_2}"', from=1-2, to=2-2]
	\arrow["{(2)}"{description}, draw=none, from=1-2, to=2-3]
	\arrow["{U_3}"', from=1-3, to=2-3]
	\arrow["{(3)}"{description}, draw=none, from=1-3, to=2-4]
	\arrow["{N_{d}}"', from=1-4, to=1-3]
	\arrow["{U_4}"', from=1-4, to=2-4]
	\arrow["{N_{\mathrm{Rezk}}}"', from=2-1, to=2-2]
	\arrow["q"', from=2-2, to=2-3]
	\arrow["{N_{\mathrm{hc}}}", from=2-4, to=2-3]
\end{tikzcd}
\end{equation}Here is the explanation of the functors in the diagram:
\begin{itemize}
\item $\delta$ is given by $\delta\pr{\mathcal{O}}=\Map_{\LQOp}\pr{i\pr -,\mathcal{O}}$.
Here $\Map_{\LQOp}$ denotes the mapping space for the simplicial
model structure for $\infty$-preoperads, and the functor  $i\from\Den\subset \Op\to\LQOp$
denotes the inclusion given by Lurie's operadic nerve functor
\cite[Definition 2.1.1.23]{HA}.
\item $q$ denotes the restriction of the functor $\mathrm{ev}_{0}^{\Den^{\op}}\from\sSet^{\Den^{\op}}\to\Set^{\Den^{\op}}$, right adjoint to the inclusion;
\item $N_{d}$ denotes the \textbf{dendroidal homotopy coherent nerve}
functor \cite[\textsection 7]{MW09} ($hcN_{d}$ in loc.cit.).
\item The functors $U_{i}$'s extract the $\infty$-category of
unary operations. More precisely, $U_{1}$ takes the fiber over
$\langle1\rangle\in\mathsf{Fin}_{\ast}$, $U_{2}$ and $U_3$ are induced
by the inclusion $\Del\hookrightarrow\Den$, and $U_{4}$ sends a
simplicial operad to its simplicial category of unary operations.
\end{itemize}
Square (1) commutes up to natural weak equivalence by \cite[Example 2.1.3.5]{HA};
square (2) and (3) commute strictly.

The category $\Op$ of operads embeds fully faithfully into each
of these categories, in the following ways:

\begin{enumerate}[label=(\roman*)]
\item using the dendroidal nerve functor $N_d\colon \Op\to \DQOp$ \cite[\textsection 2]{MW09};
\item via the inclusion $\Op\to\Op_{\Kan}$
which regards an operad as a simplicial operad with discrete mapping
objects,
\item using Lurie's operadic nerve functor 
 $N_{\Lurie}\from\Op\to\LQOp$ \cite[Definition 2.1.1.23]{HA} and the functor $\delta\from \LQOp \to \CDSS$. 
\end{enumerate}

The following result is an operadic analog of Theorem \ref{thm:models_oo-cats}.
\begin{thm}
\label{thm:models_oo-operads}The functors 
\[
\LQOp\xrightarrow{\delta}\CDSS\xrightarrow{q}\DQOp\xleftarrow{N_{d}}\Op_{\Kan}
\]
induce equivalences of $\infty$-categories upon localizing at
weak equivalences.
\end{thm}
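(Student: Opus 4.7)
The plan is to verify each of the three functors in the zigzag induces an equivalence on localizations, handling each separately by identifying it with the right adjoint of a known Quillen equivalence between the relevant model structures.

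For $N_d : \Op_{\Kan} \to \DQOp$, this is the dendroidal homotopy coherent nerve, and I would invoke Cisinski--Moerdijk's theorem \cite{CM13b}, which shows it is the right adjoint of a Quillen equivalence between the Dwyer--Kan model structure on simplicial operads and the operadic model structure on dendroidal sets. This is the direct operadic analogue of the corresponding statement for $N_{\hc}$ in Theorem \ref{thm:models_oo-cats}. For $q : \CDSS \to \DQOp$, I would appeal to Cisinski--Moerdijk \cite{CM13a}, which establishes the dendroidal version of the Joyal--Tierney comparison: $q$ is the right adjoint of a Quillen equivalence between the complete dendroidal Segal space model structure and the operadic model structure on dendroidal sets.

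The leftmost functor $\delta : \LQOp \to \CDSS$ is the most delicate, since $\LQOp$ is built on top of a model for $\infty$-categories rather than being a direct generalization. My plan is to verify that $\delta$ is the right adjoint of a Quillen equivalence between the preoperadic model structure \cite{HA} and the complete dendroidal Segal space model structure. Alternatively, since both sides are simplicial model categories and $\delta(\mathcal{O}) = \Map_{\LQOp}(i(-), \mathcal{O})$ is a mapping space functor out of the canonical generating class $\{i(T)\}_{T \in \Den}$, one could reduce the problem to a density-type statement for the inclusion $i : \Den \to \LQOp$. The main obstacle I expect is precisely this matching of the two frameworks, which typically requires careful bookkeeping to line up the specific formula for $\delta$ with the comparison functor one finds in the literature.

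With these three ingredients in place, the theorem follows, using the commutativity up to natural weak equivalence of diagram \eqref{d:models} to ensure that the composite zigzag yields a coherent chain of equivalences of $\infty$-categories after localization.
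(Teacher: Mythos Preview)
Your proposal is correct and takes essentially the same approach as the paper: treat each of the three functors separately and invoke the relevant Quillen equivalence from the literature. The paper's proof is just three citations---\cite[Theorem 8.15]{CM13b} for $N_d$, \cite[Corollary 6.7]{CM13a} for $q$, and \cite[Theorem 1.1.1]{HM24} for $\delta$---so for the last functor you can replace your sketched density/adjunction argument by a direct appeal to Hinich--Moerdijk's comparison theorem; your final paragraph on diagram~\eqref{d:models} is not needed, since the statement concerns each functor individually.
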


\begin{proof}
For $\delta$, the claim is proved in \cite[Theorem 1.1.1]{HM24};
for $q$, this is \cite[Corollary 6.7]{CM13a}; and for $N_{d}$,
this is \cite[Theorem 8.15]{CM13b}.
\end{proof}
With Theorem \ref{thm:models_oo-operads} in mind, it will be
useful to make use of the following definition:
\begin{defn}
We write $\Op_{\infty}$ for any of the relative categories
$\LQOp,\CDSS,\DQOp$, or $\Op_{\Kan}$, and write $\curlyOpinfty$
for its localization at weak equivalences. We refer to the objects of $\Op_{\infty}$
as \textbf{concrete $\infty$-operads} and to the objects of $\curlyOpinfty$
as \textbf{$\infty$-operads}. \\

Diagram \ref{d:models} gives us forgetful functors $U\from\Opinfty\to\Catinfty$
and $U\from\curlyOpinfty\to\curlyCatinfty$. If $\mathcal{O}$
is a concrete $\infty$-operad (resp.\ an $\infty$-operad), we refer
to $U\pr{\mathcal{O}}$ as the \textbf{concrete} $\infty$-\textbf{category of unary
operations} (resp.\ the \textbf{$\infty$-category of unary operations}).
\end{defn}

\begin{rem}
\label{rem:operadification}The functor $U\from\curlyOpinfty\to\curlyCatinfty$
admits a fully faithful left adjoint. (This is because, for example,
the functor $U_{3}:\Op_{\Kan}\to\Cat_{\Kan}$ is part of a Quillen
adjunction whose derived unit is an isomorphism.) We will identify
$\curlyCatinfty$ as a full subcategory of $\curlyOpinfty$ using
this left adjoint.
\end{rem}

\begin{rem}\label{rem:symmetric monoidal structures on Op infty}The $\infty$-category $\curlyOpinfty$ has a symmetric monoidal structure. This can be seen for instance by observing that Lurie's model structure for quasioperads is a symmetric monoidal model category (\cite[Proposition 2.2.5.7]{HA}), or by using Hinich-Moerdijk's result lifting the tensor product of dendroidal sets to a symmetric monoidal structure on the $\infty$-category $\CDSS$ (\cite[\S 5]{HM24}). That these two tensor products are equivalent is ensured by \cite[Theorem 1.1.1]{HM24}.
	Unravelling the definitions, we find that the inclusion $\curlyCatinfty \hookrightarrow \curlyOpinfty$ can be enhanced to a symmetric monoidal functor, where $\curlyCatinfty$ carries the cartesian monoidal structure.
	It follows that $\curlyOpinfty$ can be enriched over $\curlyCatinfty$ (that is, it can be enhanced to an $(\infty,2)$-category) in the sense of \cite[Definition 4.2.1.28]{HA}.
	We denote its mapping categories by $\qAlg_{-}(-)$. Thus we have an equivalence
	\[
		\Map_{\curlyOpinfty}(\mathcal{C}\otimes \mathcal{O},\mathcal{P}) \simeq \Map_{\curlyCatinfty}(\mathcal{C},\qAlg_{\mathcal{O}}(\mathcal{P}))
	\]
natural in all variables.
\end{rem}

\section{\label{sec:main}Main Result}

In this section, we state and prove the main result of this paper,
asserting that relative operads model $\infty$-operads (Theorem
\ref{thm:main}). To state our main result precisely, we start
with a brief discussion on relative $\infty$-categories and
relative $\infty$-operads.

\subsection{Relative $\infty$-categories}

In this subsection, we introduce relative $\infty$-categories.
\begin{defn}
A \textbf{concrete relative $\infty$-category} is a pair $\pr{\mathcal{C},S}$,
where $\mathcal{C}$ is a concrete $\infty$-category and $S$
is a set of morphisms of $\mathcal{C}$ which is stable under compositions of $\mathcal{C}$, and which is stable under natural equivalences when regarded as a set of functors $[1]\to \mathcal{C}$ in $\curlyCatinfty$.

A \textbf{relative functor} $\pr{\mathcal{C},S}\to\pr{\mathcal{D},T}$
is a map $\mathcal{C}\to\mathcal{D}$ of concrete $\infty$-categories
carrying $S$ into $T$. We let $\RelCatinfty$ denote the category
of relative concrete $\infty$-categories and relative functors.
\end{defn}

\begin{defn}
A \textbf{relative $\infty$-category} is a monomorphism $S\to\mathcal{C}$
of $\curlyCatinfty$, which we often abbreviate as $\pr{\mathcal{C},S}$.
We let $\curlyRelCatinfty\subset\Fun\pr{[1],\curlyCatinfty}$
denote the full subcategory spanned by relative $\infty$-categories.
\end{defn}

\begin{rem}
	\label{rem:rel_oo-cats_mappingspace}Using the description of
	the mapping spaces of the arrow category of $\infty$-categories
	in \cite[Proposition B.1]{Ara_calc}, we obtain the following
	description of the mapping space of $\curlyRelCatinfty$: If
	$\pr{\mathcal{C},S}$ and $\pr{\mathcal{D},T}$ are relative
	$\infty$-categories, then the map
	\[
	\Map_{\curlyRelCatinfty}\pr{\pr{\mathcal{C},S},\pr{\mathcal{D},T}}\to\Map_{\curlyCatinfty}\pr{\mathcal{C},\mathcal{D}}
	\]
	is fully faithful, with essential image consisting of those maps
	$\mathcal{C}\to\mathcal{D}$ whose restriction to $S$ factors
	through $T$.
\end{rem}

An important operation associated with relative $\infty$-categories is \textit{localization}:

\begin{defn}\label{def:loccats}
	Let $\pr{\mathcal{C},S}$ be a relative $\infty$-category.
	We say that a map $\gamma:\mathcal{C}\to\mathcal{D}$ of $\infty$-categories
	\textbf{exhibits $\mathcal{D}$ as a localization of $\mathcal{C}$
		at $S$ }if for every $\infty$-category $\mathcal{Z}$, precomposition
	with $\gamma$ induces an equivalence
	\[
	\Map_{\curlyCatinfty}\pr{\mathcal{D},\mathcal{Z}}\xrightarrow{\sim}\Map_{\curlyCatinfty}^{S}\pr{\mathcal{C},\mathcal{Z}},
	\]
	where $\Map_{\curlyCatinfty}^{S}\pr{\mathcal{C},\mathcal{Z}}\subset\Map_{\curlyCatinfty}\pr{\mathcal{C},\mathcal{Z}}$
	denotes the full sub $\infty$-groupoid spanned by the maps $\mathcal{C}\to\mathcal{Z}$
	carrying each morphism of $S$ into an equivalence of $\mathcal{Z}$.
	If this condition is satisfied, we write $\mathcal{D}=\mathcal{C}[S^{-1}]$.
\end{defn}


\begin{rem}
	Note that the localization of a relative $\infty$-category ($\mathcal{C},S$) always exists; for instance, we can just take the pushout $S[S^{-1}]\amalg_{S}\mathcal{C}$, where $S[S^{-1}]$ denotes the $\infty$-groupoid completion of $S$. 
It follows from Remark \ref{rem:rel_oo-cats_mappingspace} that localization can even be constructed functorially: The inclusion
\[
	\curlyCatinfty \hookrightarrow \curlyRelCatinfty,\, \mathcal{C}\mapsto (\mathcal{C},\mathcal{C}^{\simeq}) = (\mathcal{C}, \text{equivalences})
\]
admits a left adjoint, with unit map given by localization. 
\end{rem}
\begin{rem}\label{rem:loc_cat}
Localization of $\infty$-categories enjoys an $(\infty,2)$-universal property.
More precisely, a functor $\gamma\from \mathcal{C}\to \mathcal{D}$ of $\infty$-categories exhibits $\mathcal{D}$ as a localization of $\mathcal{C}$ at $S$ if and only if it satisfies the following apparently stronger condition: 
For every $\infty$-category $\mathcal{Z}$, precomposition with $\gamma$ induces an equivalence of $\infty$-categories
	\[
		\Fun(\mathcal{D},\mathcal{Z})\xrightarrow{\sim}\Fun^S(\mathcal{C},\mathcal{Z}),
	\]
where $\Fun(-,-)$ denotes the internal hom of the cartesian monoidal $\infty$-category $\curlyCatinfty$, and the right-hand side is the full subcategory of $\Fun(\mathcal{C},\mathcal{Z})$ spanned by the functors carrying each map in $S$ to equivalences in $\mathcal{Z}$. This universal property follows from the Yoneda lemma, as we have the identification
\[
	\Map_{\curlyCatinfty}(\mathcal{A},\Fun^S(\mathcal{C},\mathcal{Z})) \simeq 
	\Map_{\curlyCatinfty}^S(\mathcal{C},\Fun(\mathcal{A},\mathcal{Z})).
\]

Beware that in some literature (most notably \cite{HTT}), the term ``localization'' is reserved for functors admitting fully faithful right adjoints. Because of this, localization in our sense is sometimes called \textit{Dwyer--Kan} localization.
	%
\end{rem}

We say that a relative functor of concrete relative $\infty$-categories
is a \textbf{relative equivalence} if its image in $\curlyRelCatinfty$
is an equivalence. With this definition, we have:
\begin{prop}
\label{prop:deloc_cat}The functor
\[
\RelCatinfty[\mathrm{rel.eq}^{-1}]\to\curlyRelCatinfty
\]
 is an equivalence of $\infty$-categories, where $\mathrm{rel.eq}$ denotes the subcategory of maps whose
 image in $\curlyRelCatinfty$ is an equivalence.
\end{prop}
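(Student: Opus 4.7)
The plan is to factor the comparison functor through the arrow $\infty$-category $\Fun([1], \Catinfty)$ and reduce the statement to the analogous result for $\infty$-categories, Theorem~\ref{thm:models_oo-cats}. First I would introduce a functor
\[
	F\from \RelCatinfty \to \Fun([1], \Catinfty),\quad (\mathcal{C}, S) \mapsto \pr{S \hookrightarrow \mathcal{C}},
\]
where $S$ is viewed as the wide subcategory of $\mathcal{C}$ whose morphisms are those in $S$. Since the vertical arrows in the image are subcategory inclusions, a morphism in $\Fun([1], \Catinfty)$ between two such objects is determined by its component on $\mathcal{C}$, so $F$ is fully faithful at the level of underlying $1$-categories. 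Moreover, by the definition of relative equivalence and the fact that equivalences in $\curlyRelCatinfty \subset \Fun([1], \curlyCatinfty)$ are pointwise, a morphism of $\RelCatinfty$ is a relative equivalence if and only if its image under $F$ is a pointwise weak equivalence in the arrow category.

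Next I would invoke Theorem~\ref{thm:models_oo-cats} together with the standard fact that localization at pointwise weak equivalences commutes with finite diagrams (applied to $[1]$): the natural functor
\[
	\Fun([1], \Catinfty)[\mathrm{pw.weq}^{-1}] \xrightarrow{\sim} \Fun([1], \curlyCatinfty)
\]
is an equivalence. Postcomposing with (the localization of) $F$, I obtain a functor
\[
	\bar F\from \RelCatinfty[\mathrm{rel.eq}^{-1}] \to \Fun([1], \curlyCatinfty)
\]
through which the comparison functor of the proposition factors. It then suffices to show that $\bar F$ is fully faithful with essential image $\curlyRelCatinfty$.

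For image containment and essential surjectivity: any concrete $(\mathcal{C}, S)$ is sent by $\bar F$ to a subcategory inclusion $S \hookrightarrow \mathcal{C}$, which is a monomorphism in $\curlyCatinfty$ because on every pair of objects the induced map on mapping spaces is the inclusion of a union of connected components (using stability of $S$ under natural equivalence). Conversely, given any monomorphism $S \hookrightarrow \mathcal{C}$ in $\curlyCatinfty$, I would present $\mathcal{C}$ by a concrete $\mathcal{C}_0$ and let $S_0$ be the set of morphisms of $\mathcal{C}_0$ whose homotopy class in $\mathcal{C}$ lies in $S$; stability of $S_0$ under composition and natural equivalence is inherited from $S$, and $(\mathcal{C}_0, S_0) \in \RelCatinfty$ maps under $\bar F$ to an arrow equivalent to $(S \hookrightarrow \mathcal{C})$.

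The main obstacle is the full faithfulness of $\bar F$. By Remark~\ref{rem:rel_oo-cats_mappingspace}, the mapping spaces between objects of $\curlyRelCatinfty$, computed inside $\Fun([1], \curlyCatinfty)$, are full sub-$\infty$-groupoids of $\Map_{\curlyCatinfty}(\mathcal{C}, \mathcal{D})$ cut out by those functors whose restriction to $S$ factors through $T$. One must check that $\Map_{\RelCatinfty[\mathrm{rel.eq}^{-1}]}((\mathcal{C},S),(\mathcal{D},T))$ admits the same description under the identification $\Map_{\Catinfty[\mathrm{weq}^{-1}]}(\mathcal{C}, \mathcal{D}) \simeq \Map_{\curlyCatinfty}(\mathcal{C}, \mathcal{D})$. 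I would attack this by a hammock-style zig-zag argument: any functor whose restriction to $S$ lands up to equivalence in $T$ can be strictified to a bona fide relative functor, and any two such strictifications of the same underlying homotopy class of maps are connected by a zig-zag of relative equivalences.
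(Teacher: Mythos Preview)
Your overall strategy---factor through $\Fun([1],\Catinfty)$, use the equivalence $\Fun([1],\Catinfty)[\mathrm{pw.weq}^{-1}]\simeq\Fun([1],\curlyCatinfty)$, and identify the essential image with the monomorphisms---is reasonable, and your essential surjectivity argument is fine. However, the full faithfulness step has a genuine gap. The passage from ``$F$ is fully faithful as a $1$-functor and detects weak equivalences'' to ``$\bar F$ is fully faithful after localization'' is not formal: localizing a full subcategory at the restricted weak equivalences need not produce a full subcategory of the ambient localization. Your proposed ``hammock-style zig-zag argument'' is a restatement of what must be proved, not a mechanism for proving it; you would need to control the full homotopy type of the derived mapping spaces, not just exhibit representatives and connect them on $\pi_0$.

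The paper supplies exactly the missing ingredient, and by a different route. Rather than factoring through the arrow category, it computes the mapping spaces of $\RelCatinfty[\mathrm{rel.eq}^{-1}]$ directly via the derived mapping space lemma \cite[Theorem~2.2]{ACK25}: given $(\mathcal{C},S)$, one takes a cosimplicial resolution $\mathcal{C}^\bullet$ of $\mathcal{C}$ in the model structure underlying $\Catinfty$ and lifts it to a resolution $(\mathcal{C}^\bullet,S^\bullet)$ by letting $S^n$ be the preimage of $S$. The point is that this really is a resolution in $\RelCatinfty$ for the relative-equivalence structure, and the resulting simplicial hom-set $\Hom_{\RelCatinfty}((\mathcal{C}^\bullet,S^\bullet),(\mathcal{D},T))$ is then visibly the full sub-$\infty$-groupoid of $\Hom_{\Catinfty}(\mathcal{C}^\bullet,\mathcal{D})\simeq\Map_{\curlyCatinfty}(\mathcal{C},\mathcal{D})$ cut out by the relativity condition. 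Comparing with Remark~\ref{rem:rel_oo-cats_mappingspace} finishes the proof. If you want to salvage your approach, you would need an argument of comparable strength---for instance, exhibiting a model structure on $\Fun([1],\Catinfty)$ in which the objects in the image of $F$ are the bifibrant ones, or showing that $F$ admits a homotopical left adjoint---but none of this is lighter than the paper's resolution argument.
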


\begin{proof}
The functor under consideration is essentially surjective, so
it suffices to show that it is fully faithful. We prove this
by using the derived mapping space lemma \cite[Theorem 2.2]{ACK25},
which identifies the mapping spaces of a localization using resolutions. 

Given a concrete relative $\infty$-category $\pr{\mathcal{C},S}$,
choose a cosimplicial resolution $\mathcal{C}^{\bullet}$ of
$\mathcal{C}$ in whichever model structure defining $\Catinfty$.
We extend $\mathcal{C}^{\bullet}$ to a cosimplicial object $\pr{\mathcal{C}^{\bullet},S^{\bullet}}$
in $\RelCatinfty$ so that for each $n\geq0$, the map $\pr{\mathcal{C}^{n},S^{n}}\to\pr{\mathcal{C},S}$
is a relative equivalence.
(Explicitly, $S^n$ is the preimage of $S$ under $\mathcal{C}^n\to \mathcal{{C}}$.)
If $\pr{\mathcal{D},T}$ is any other
concrete relative $\infty$-category, then the simplicial set
$\RelCatinfty\pr{\pr{\mathcal{C}^{\bullet},S^{\bullet}},\pr{\mathcal{D},T}}$
is the full sub $\infty$-groupoid of $\Catinfty\pr{\mathcal{C}^{\bullet},\mathcal{D}}$
spanned by the vertices $\mathcal{C}^{0}\to\mathcal{D}$ determining
a morphism $\pr{\mathcal{C}^{0},S^{0}}\to\pr{\mathcal{D},T}$
of concrete relative $\infty$-categories. 

According to \cite[Theorem 2.2]{ACK25}, the map 
$$
\Hom_{\RelCatinfty}\pr{\pr{\mathcal{C},S},-}\to\Hom_{\RelCatinfty}\pr{\pr{\mathcal{C}^{\bullet},S^{\bullet}},-}
$$
of functors $\RelCatinfty\to\mathcal{S}$ induces an equivalence
\[
\Map_{\RelCatinfty[\mathrm{rel.eq}^{-1}]}\pr{\pr{\mathcal{C},S},-}\xrightarrow{\sim}\Hom_{\RelCatinfty}\pr{\pr{\mathcal{C}^{\bullet},S^{\bullet}},-}.
\]
Similarly, we
have an equivalence
\[
\Map_{\curlyCatinfty}\pr{\mathcal{C},-}\xrightarrow{\sim}\Hom_{\Catinfty}\pr{\mathcal{C}^{\bullet},-}.
\]
Combining them, we find that for any concrete relative
$\infty$-operad $\pr{\mathcal{D},T}$, the map
\[
\Map_{\RelCatinfty[\mathrm{rel.eq}^{-1}]}\pr{\pr{\mathcal{C},S},\pr{\mathcal{D},T}}\to\Map_{\curlyCatinfty}\pr{\mathcal{C},\mathcal{D}}
\]
is a monomorphism of $\infty$-groupoids, with essential image
consisting of those maps $\mathcal{C}\to\mathcal{D}$ lifting
to a morphism of relative $\infty$-categories $\pr{\mathcal{C},S}\to\pr{\mathcal{D},T}$.
It follows from Remark \ref{rem:rel_oo-cats_mappingspace} the
map $\RelCatinfty[\mathrm{rel.eq}^{-1}]\to\curlyRelCatinfty$
is fully faithful, as required.
\end{proof}

\subsection{Relative $\infty$-operads and their localization}

In this subsection, we introduce the notion of relative $\infty$-operads
and their localization.
\begin{defn}
We define the ordinary category $\RelOpinfty$ of \textbf{concrete
relative $\infty$-operads} by the ordinary pullback
\[\begin{tikzcd}
	{\mathsf{RelOp}_{\infty}} & {\mathsf{Op}_\infty} \\
	{\mathsf{RelCat}_{\infty}} & {\mathsf{Cat}_{\infty}.}
	\arrow[from=1-1, to=1-2]
	\arrow[from=1-1, to=2-1]
	\arrow["\lrcorner"{description, pos=0}, draw=none, from=1-1, to=2-2]
	\arrow["U", from=1-2, to=2-2]
	\arrow["{\mathrm{ev}_0}"', from=2-1, to=2-2]
\end{tikzcd}\]Explicitly, an object of $\RelOpinfty$ is a pair $\pr{\mathcal{O},S}$,
where $\mathcal{O}$ is a concrete $\infty$-operad and $S$
is a set of morphisms of $U\pr{\mathcal{O}}$ which is stable under compositions, and which is stable under natural equivalences when regarded as a set of functors $[1]\to U(\mathcal{O})$ in $\curlyCatinfty$. We will regard
$\Op_{\infty}$ as a full subcategory of $\RelOp_{\infty}$ by
the functor $$\Op_\infty \hookrightarrow \RelOp_\infty, \quad \mathcal{O}\mapsto\pr{\mathcal{O}, U\pr{\mathcal{O}}\!\,^{\simeq}}$$ sending $\mathcal{O}$ to the 'minimal' relative $\infty$-operad on it.

When we work with a specific choice of $\Opinfty$, we use notations
like $\Rel\DQOp$ and terminology like ``relative dendroidal
quasioperads.''
\end{defn}

\begin{defn}
\label{def:rel_oo-opds}We define an $\infty$-category $\curlyRelOpinfty$
by the pullback
\[\begin{tikzcd}
	{\mathcal{R}\mathsf{el}\mathcal{O}\mathsf{p}_{\infty}} & {\mathcal{O}\mathsf{p}_\infty} \\
	{\mathcal{R}\mathsf{el}\mathcal{C}\mathsf{at}_{\infty}} & {\mathcal{C}\mathsf{at}_{\infty},}
	\arrow[from=1-1, to=1-2]
	\arrow[from=1-1, to=2-1]
	\arrow["\lrcorner"{description, pos=0}, draw=none, from=1-1, to=2-2]
	\arrow["U", from=1-2, to=2-2]
	\arrow["{\mathrm{ev}_0}"', from=2-1, to=2-2]
\end{tikzcd}\]and refer to its objects as \textbf{relative $\infty$-operads}.
Concretely, a relative $\infty$-operad is an $\infty$-operad
$\mathcal{O}$ equipped with a monomorphism of $\infty$-categories
$S\hookrightarrow U\pr{\mathcal{O}}$. We often abbreviate such
pairs as $\pr{\mathcal{O},S}$.
\end{defn}

\begin{rem}
\label{rem:def:rel_oo-opds}By Remark \ref{rem:rel_oo-cats_mappingspace},
if $\pr{\mathcal{O},S}$ and $\pr{\mathcal{P},T}$ are relative
$\infty$-operads, the map
\[
\Map_{\curlyRelOpinfty}\pr{\pr{\mathcal{O},S},\pr{\mathcal{P},T}}\to\Map_{\curlyOpinfty}\pr{\mathcal{O},\mathcal{P}}
\]
is the full sub $\infty$-groupoid of $\Map_{\curlyOpinfty}\pr{\mathcal{O},\mathcal{P}}$
spanned by the maps $\mathcal{O}\to\mathcal{P}$ carrying $S$
into $T$. 

Using this, we find that $\curlyOpinfty$ embeds fully faithfully
into $\curlyRelOpinfty$ via the 'minimal' relative operad functor, just like $\Opinfty$ embeds fully
faithfully into $\RelOpinfty$.
\end{rem}

There is an obvious functor $$ \RelOpinfty\longrightarrow\curlyRelOpinfty,$$ and we say that a morphism of $\RelOpinfty$ is a \textbf{relative
	equivalence} if its image in $\curlyRelOpinfty$ is an equivalence.
We then have the following proposition, whose proof is entirely
analogous to that of Proposition \textit{\emph{\ref{prop:deloc_cat}:}}
\begin{prop}
\label{prop:deloc_op}The functor
\[
\RelOpinfty[\mathrm{rel.eq}^{-1}]\to\curlyRelOpinfty
\]
is an equivalence of $\infty$-categories, where $\mathrm{rel.eq}$ denotes the subcategory of maps whose
image in $\curlyRelOpinfty$ is an equivalence.
\end{prop}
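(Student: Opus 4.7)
The plan is to follow verbatim the strategy of Proposition \ref{prop:deloc_cat}, replacing $\Catinfty$ by $\Opinfty$ throughout and using Remark \ref{rem:def:rel_oo-opds} in place of Remark \ref{rem:rel_oo-cats_mappingspace}. Essential surjectivity is immediate because every object of $\curlyRelOpinfty$ is a monomorphism $S \hookrightarrow U(\mathcal{O})$, and both $\mathcal{O}$ and $S$ can be lifted to concrete models (since $\Opinfty \to \curlyOpinfty$ and $\Catinfty \to \curlyCatinfty$ are essentially surjective), giving a concrete relative $\infty$-operad whose image in $\curlyRelOpinfty$ is equivalent to the given one. It then remains to prove full faithfulness, and this is where the cosimplicial resolution machinery of \cite[Theorem 2.2]{ACK25} enters.

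For a fixed concrete relative $\infty$-operad $(\mathcal{O},S)$, I would first choose a cosimplicial resolution $\mathcal{O}^\bullet$ of $\mathcal{O}$ in whichever model category is used to define $\Opinfty$. I would then promote it to a cosimplicial object $(\mathcal{O}^\bullet,S^\bullet)$ in $\RelOpinfty$ by declaring $S^n$ to be the preimage of $S$ under the map of unary $\infty$-categories $U(\mathcal{O}^n)\to U(\mathcal{O})$. Since $U$ preserves weak equivalences (either directly or because each $U_i$ in Diagram \ref{d:models} is a right Quillen functor) and monomorphisms in $\curlyCatinfty$ are stable under pullback along equivalences, the induced map $(\mathcal{O}^n,S^n)\to(\mathcal{O},S)$ is an equivalence in the pullback $\curlyRelOpinfty = \curlyRelCatinfty\times_{\curlyCatinfty}\curlyOpinfty$, hence a relative equivalence. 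Applying \cite[Theorem 2.2]{ACK25} in both $\RelOpinfty$ and $\Opinfty$, for any concrete relative $\infty$-operad $(\mathcal{P},T)$ I obtain a commutative square
\[
\begin{tikzcd}
\Map_{\RelOpinfty[\mathrm{rel.eq}^{-1}]}\pr{(\mathcal{O},S),(\mathcal{P},T)} \ar[r,"\sim"] \ar[d] & \Hom_{\RelOpinfty}\pr{(\mathcal{O}^{\bullet},S^{\bullet}),(\mathcal{P},T)} \ar[d] \\
\Map_{\curlyOpinfty}\pr{\mathcal{O},\mathcal{P}} \ar[r,"\sim"] & \Hom_{\Opinfty}\pr{\mathcal{O}^{\bullet},\mathcal{P}}
\end{tikzcd}
\]
in which the right-hand vertical map is, by construction, the inclusion of the full sub $\infty$-groupoid spanned by those vertices $\mathcal{O}^0\to\mathcal{P}$ that send $S^0$ into $T$.

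Combining the two horizontal equivalences, the left vertical map is a monomorphism with essential image exactly the components of $\Map_{\curlyOpinfty}(\mathcal{O},\mathcal{P})$ carrying $S$ into $T$. By Remark \ref{rem:def:rel_oo-opds}, this is precisely the mapping space in $\curlyRelOpinfty$, so the comparison functor is fully faithful and the proof concludes. The only point that is not purely formal is the verification that cosimplicial resolutions exist in the operadic models: for $\LQOp$, $\CDSS$, $\DQOp$, and $\Op_{\Kan}$ this is automatic from the model category structures recalled in Subsection \ref{subsec:models_oo-opds}, so this potential obstacle is benign. The genuinely delicate step to double-check is that taking preimages $S^n\subset U(\mathcal{O}^n)$ is compatible with stability under composition and natural equivalence, which follows because both conditions are detected pointwise by the equivalence $U(\mathcal{O}^n)\to U(\mathcal{O})$.
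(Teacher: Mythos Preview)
Your proposal is correct and matches the paper's own argument exactly: the paper states that the proof is ``entirely analogous to that of Proposition \ref{prop:deloc_cat}'' and gives no further details, and you have spelled out precisely that analogy, including the same use of cosimplicial resolutions, preimage markings, the derived mapping space lemma \cite[Theorem 2.2]{ACK25}, and Remark \ref{rem:def:rel_oo-opds} in place of Remark \ref{rem:rel_oo-cats_mappingspace}.
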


We next turn to localization of relative $\infty$-operads. 
\begin{defn}
\label{def:loc}Let $\pr{\mathcal{O},S}$ be a relative $\infty$-operad.
We say that a map $\gamma:\mathcal{O}\to\mathcal{P}$ of $\infty$-operads
\textbf{exhibits $\mathcal{P}$ as a localization of $\mathcal{O}$
at $S$} if for every $\infty$-operad $\mathcal{Q}$, precomposition
with $\gamma$ induces an equivalence
\[
\Map_{\curlyOpinfty}\pr{\mathcal{P},\mathcal{Q}}\xrightarrow{\sim}\Map_{\curlyOpinfty}^{S}\pr{\mathcal{O},\mathcal{Q}},
\]
where $\Map_{\curlyOpinfty}^{S}\pr{\mathcal{O},\mathcal{Q}}\subset\Map_{\curlyOpinfty}\pr{\mathcal{O},\mathcal{Q}}$
denotes the full sub $\infty$-groupoid spanned by the maps $\mathcal{O}\to\mathcal{Q}$
carrying each morphism in $S$ into  $U\pr{\mathcal{Q}}\!\,^{\simeq}$.
If this condition is satisfied, we write $\mathcal{P}=\mathcal{O}[S^{-1}]$.
\end{defn}

\begin{rem}
\label{rem:loc_esssurj}In the situation described in Definition \ref{def:loc},
the underlying functor $U\pr{\gamma}\from U\pr{\mathcal{O}}\to U\pr{\mathcal{O}[S^{-1}]}$
is essentially surjective. To see this, factor $\mathcal{O}\to\mathcal{O}[S^{-1}]$
as $\mathcal{O}\xrightarrow{\alpha}\mathcal{Q}\xrightarrow{\beta}\mathcal{O}[S^{-1}]$,
where $U\pr{\alpha}$ is essentially surjective and $U\pr{\beta}$
is fully faithful. By the universal property of localizations,
we can find a dashed arrow in the diagram
\[\begin{tikzcd}
	{\mathcal{O}} & {\mathcal{Q}} \\
	{\mathcal{O}[S^{-1}]} & {\mathcal{O}[S^{-1}]}
	\arrow["\alpha", from=1-1, to=1-2]
	\arrow["\gamma"', from=1-1, to=2-1]
	\arrow["\beta", from=1-2, to=2-2]
	\arrow["{\chi }"{description}, dashed, from=2-1, to=1-2]
	\arrow["{\mathrm{id}_{\mathcal{O}[S^{-1}]}}"', from=2-1, to=2-2]
\end{tikzcd}\]The commutativity of the diagram implies that $U\pr{\beta}$
is essentially surjective. Therefore, $U\pr{\gamma}=U\pr{\beta}\circ U\pr{\alpha}$
is a composite of two essentially surjective functors, concluding the claim.
\end{rem}

Note that every relative $\infty$-operad $\pr{\mathcal{O},S}$
admits a localization: For example, we can simply take $\mathcal{O}[S^{-1}]$
as the pushout 
\[\begin{tikzcd}
	S & {\mathcal{O}} \\
	{S[S^{-1}]} & {\mathcal{O}[S^{-1}],}
	\arrow[from=1-1, to=1-2]
	\arrow[from=1-1, to=2-1]
	\arrow["\ulcorner"{description, pos=1}, draw=none, from=1-1, to=2-2]
	\arrow[from=1-2, to=2-2]
	\arrow[from=2-1, to=2-2]
\end{tikzcd}\]where $S[S^{-1}]$ denotes the localization of the $\infty$-category
$S$ at all of its morphisms. Combining this with Remark \ref{rem:def:rel_oo-opds},
we deduce that:

\begin{prop}
\label{prop:opr_loc}The inclusion $\curlyOpinfty\hookrightarrow\curlyRelOpinfty$
admits a left adjoint.
\end{prop}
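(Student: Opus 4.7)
The plan is to exhibit, for each relative $\infty$-operad $(\mathcal{O},S)$, an initial object of the comma $\infty$-category $(\mathcal{O},S)\downarrow \iota$, where $\iota\from \curlyOpinfty \hookrightarrow \curlyRelOpinfty$ denotes the inclusion. By the standard pointwise criterion for existence of adjoints, this will furnish the desired left adjoint.

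The natural candidate is the unit map $\eta\from (\mathcal{O},S) \to \iota(\mathcal{O}[S^{-1}])$ coming from the pushout construction displayed just before the proposition. First I need to check that $\eta$ is well-defined as a morphism of $\curlyRelOpinfty$, i.e.\ that the underlying $\infty$-operad map $\mathcal{O}\to \mathcal{O}[S^{-1}]$ carries $S$ into $U(\mathcal{O}[S^{-1}])^{\simeq}$. This is immediate from the pushout diagram, since the composite $S\to \mathcal{O} \to \mathcal{O}[S^{-1}]$ factors through the $\infty$-groupoid $S[S^{-1}]$, hence sends every morphism to an equivalence.

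Next, initiality of $\eta$ amounts to showing that for every $\infty$-operad $\mathcal{Q}$, precomposition with $\eta$ induces an equivalence
\[
\Map_{\curlyOpinfty}(\mathcal{O}[S^{-1}],\mathcal{Q}) \xrightarrow{\sim} \Map_{\curlyRelOpinfty}((\mathcal{O},S),\iota(\mathcal{Q})).
\]
By Remark \ref{rem:def:rel_oo-opds}, the right-hand side is identified with the full sub $\infty$-groupoid $\Map_{\curlyOpinfty}^{S}(\mathcal{O},\mathcal{Q})\subseteq \Map_{\curlyOpinfty}(\mathcal{O},\mathcal{Q})$ spanned by those maps sending every morphism of $S$ into $U(\mathcal{Q})^{\simeq}$. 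The required equivalence is then precisely the defining universal property of the localization $\mathcal{O}[S^{-1}]$ (Definition \ref{def:loc}).

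There is no real obstacle: the proof is essentially a bookkeeping argument assembling the pushout construction of $\mathcal{O}[S^{-1}]$, the description of mapping spaces in $\curlyRelOpinfty$ from Remark \ref{rem:def:rel_oo-opds}, and the universal property from Definition \ref{def:loc}. The only point demanding any care is checking that the unit $\eta$ actually lands in the relative mapping space (which is handled by the pushout factorization above).
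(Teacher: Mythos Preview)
Your proof is correct and follows essentially the same approach as the paper: the paper simply notes that every relative $\infty$-operad admits a localization via the pushout construction and then says ``Combining this with Remark \ref{rem:def:rel_oo-opds}, we deduce'' the proposition, which is exactly the pointwise-adjoint argument you spell out. You have merely made explicit the verification that $\eta$ lands in the correct sub $\infty$-groupoid and invoked Definition \ref{def:loc}, details the paper leaves to the reader.
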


We will refer to the left adjoint of Proposition \ref{prop:opr_loc}
as the \textbf{operadic localization functor} and denote it by
$L\from\curlyRelOpinfty\to\curlyOpinfty$.

\begin{defn}
	A morphism of relative $\infty$-operads or concrete relative $\infty$-operads is called a \textbf{local equivalence} if its image under the localization functor $L$ is an equivalence. 
\end{defn}

\begin{cor}
\label{cor:Op_infty_loc}The functors $\Op_{\infty}\hookrightarrow\RelOpinfty\to\curlyRelOpinfty\xrightarrow{L}\curlyOpinfty$
induce equivalences of $\infty$-categories 
\[
\Op_{\infty}[\mathrm{weq}^{-1}]\xrightarrow{\sim}\RelOpinfty[\mathrm{loc.eq}^{-1}]\xrightarrow{\sim}\curlyRelOpinfty[\mathrm{loc.eq}^{-1}]\xrightarrow[]{\sim}\curlyOpinfty,
\]
where $\mathrm{loc.eq}$ denotes the subcategory of local equivalences.
\end{cor}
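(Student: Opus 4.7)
My plan is to analyze the three functors in the chain separately, proceeding from right to left, and to use Propositions \ref{prop:deloc_op} and \ref{prop:opr_loc} together with a two-out-of-three argument for the leftmost arrow.

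For the rightmost map $\curlyRelOpinfty[\mathrm{loc.eq}^{-1}]\to\curlyOpinfty$, I would invoke Proposition \ref{prop:opr_loc}, which gives a fully faithful right adjoint $\curlyOpinfty\hookrightarrow\curlyRelOpinfty$ with left adjoint $L$. This makes $\curlyOpinfty$ a reflective subcategory of $\curlyRelOpinfty$. By the standard formalism of reflective localizations in $\infty$-categories (e.g.\ \cite[Proposition 5.2.7.12]{HTT}), the reflection $L$ exhibits $\curlyOpinfty$ as the localization of $\curlyRelOpinfty$ at the class of maps that $L$ inverts; by the very definition of local equivalences this class is $\mathrm{loc.eq}$.

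For the middle map, observe that every relative equivalence is a local equivalence (since $\curlyRelOpinfty\xrightarrow{L}\curlyOpinfty$ preserves equivalences). Hence Proposition \ref{prop:deloc_op} gives $\RelOpinfty[\mathrm{rel.eq}^{-1}]\xrightarrow{\sim}\curlyRelOpinfty$, and further localizing both sides at the image of $\mathrm{loc.eq}$ yields the equivalence
\[
\RelOpinfty[\mathrm{loc.eq}^{-1}]\xrightarrow{\sim}\curlyRelOpinfty[\mathrm{loc.eq}^{-1}]
\]
by the universal property of successive localizations.

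For the leftmost map I would use the two-out-of-three property. The full composite
\[
\Op_{\infty}\hookrightarrow\RelOpinfty\to\curlyRelOpinfty\xrightarrow{L}\curlyOpinfty
\]
sends a concrete $\infty$-operad $\mathcal{O}$ to $L(\mathcal{O},U(\mathcal{O})^{\simeq})$, and this is canonically equivalent to $\mathcal{O}$ itself: by the adjunction defining $L$ and Remark \ref{rem:def:rel_oo-opds}, for any $\mathcal{Q}\in\curlyOpinfty$
\[
\Map_{\curlyOpinfty}(L(\mathcal{O},U(\mathcal{O})^{\simeq}),\mathcal{Q})\simeq\Map_{\curlyRelOpinfty}((\mathcal{O},U(\mathcal{O})^{\simeq}),(\mathcal{Q},U(\mathcal{Q})^{\simeq}))\simeq\Map_{\curlyOpinfty}(\mathcal{O},\mathcal{Q}),
\]
the last equivalence because any map of $\infty$-operads automatically carries equivalences to equivalences. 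Thus the composite agrees with the defining localization functor $\Op_{\infty}\to\Op_{\infty}[\mathrm{weq}^{-1}]\simeq\curlyOpinfty$, and in particular it sends weak equivalences to equivalences. Since the first functor in the chain sends weak equivalences to local equivalences (same argument), it descends to a functor $\Op_{\infty}[\mathrm{weq}^{-1}]\to\RelOpinfty[\mathrm{loc.eq}^{-1}]$; this new functor fits into a commuting triangle whose other two legs are the equivalences established above, so it is itself an equivalence.

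I do not expect any serious obstacle: once the reflective-subcategory characterization of localization and Proposition \ref{prop:deloc_op} are in hand, everything is formal manipulation of universal properties. The only subtlety to watch is that localizations of $\infty$-categories at a class of morphisms are compatible with further localization at a larger class, which is needed for the middle step.
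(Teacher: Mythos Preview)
Your proposal is correct and follows essentially the same route as the paper's proof: the rightmost arrow via Proposition \ref{prop:opr_loc} and the reflective-localization characterization, the middle arrow via Proposition \ref{prop:deloc_op} together with $\mathrm{rel.eq}\subseteq\mathrm{loc.eq}$, and the leftmost arrow by two-out-of-three once the composite is identified with the defining localization $\Op_{\infty}\to\curlyOpinfty$. The only difference is cosmetic: the paper dismisses the identification of the composite as ``tautological'' (since $\curlyOpinfty$ is by definition $\Op_{\infty}[\mathrm{weq}^{-1}]$), whereas you spell it out via the Yoneda argument and Remark \ref{rem:def:rel_oo-opds}.
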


\begin{proof}
The functor $\Opinfty[\mathrm{weq}^{-1}]\to\curlyOpinfty$ is
tautologically an equivalence, so it suffices to show that the
middle and the right arrows are equivalences. The claim on the
middle one follows from Proposition \ref{prop:deloc_op}, since $\mathrm{rel.eq}\subseteq \mathrm{loc.eq}$. The
claim on the right one follows from Proposition \ref{prop:opr_loc},
because functors admitting fully faithful right adjoints are
localizations \cite[\href{https://kerodon.net/tag/04JL}{Tag 04JL}]{kerodon}, and because $\mathrm{loc.eq}$ is by definition the subcategory of $L$-local equivalences.
\end{proof}

\begin{rem}\label{rem:operadic_loc}
	Just like the localization of $\infty$-categories, operadic localization enjoys an $(\infty,2)$-universal property. 
	More precisely, if $(\mathcal{O},S)$ is a relative $\infty$-operad, then, for any $\infty$-operad $\mathcal{P}$, the map
	\[
		\theta\from \qAlg_{\mathcal{O}[S^{-1}]}(\mathcal{P}) \to 
		\qAlg^S_{\mathcal{O}}(\mathcal{P})
	\]
is an equivalence, where the right-hand side denotes the full subcategory spanned by the maps carrying each map in $S$ to an equivalence of $U(\mathcal{P})$. 
Indeed, $\theta$ is a pullback of the map $\Fun(S[S^{-1}],U(\mathcal{P}))\to \Fun^S(S,U(\mathcal{P}))$, which is an equivalence by Remark \ref{rem:loc_cat}.
\end{rem}

\begin{rem}
	The characterization of operadic localization in Remark \ref{rem:operadic_loc} is closely related to the theory
of \textit{approximation} of $\infty$-operads, \cite[\textsection 2.3.3]{HA} (see also \cite{harpazFactorizationHomologyNotes} or \cite[\textsection C]{carmona_additivity_2025}),
which gives sufficient conditions for a map of (relative) $\infty$-operads
to be a localization. Readers should also consult \cite[Corollary B.3]{CC_notlittle}, and the related \cite[Lemma A.4.2]{karlsson_assembly_2025},
for another characterization of localization of relative $\infty$-operads
in terms of that of relative $\infty$-categories.
\end{rem}

\subsection{Proof of Theorem \ref{thm:main_intro}}

We can now state and prove the main result of this paper (Theorem
\ref{thm:main}).
\begin{defn}
A \textbf{relative operad} is a pair $\pr{\mathcal{O},S}$, where
$\mathcal{O}$ is an operad and $S$ is a replete subcategory of the
category of unary operations, whose morphisms are called \textbf{weak equivalences}. A \textbf{morphism}
of relative operads is a morphism between the underlying operads that
preserves weak equivalences. We write $\RelOp$ for the category
of relative operads and their morphisms. Note that we can identify
$\RelOp$ with a full subcategory of $\curlyRelOpinfty$.
\end{defn}

\begin{thm}
\label{thm:main}The composite $\RelOp\hookrightarrow\curlyRelOpinfty\xrightarrow{L}\curlyOpinfty$
induces an equivalence of $\infty$-categories
\[
\RelOp[\mathrm{loc.eq}^{-1}]\xrightarrow{\sim}\curlyOpinfty.
\]
\end{thm}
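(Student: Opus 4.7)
The plan is to factor the functor in the statement as
\[
\RelOp\hookrightarrow\RelOpinfty\to\curlyRelOpinfty\xrightarrow{L}\curlyOpinfty,
\]
using (for instance) the embedding $\Op\hookrightarrow\Op_{\Kan}$ of ordinary operads as locally discrete simplicial operads, and to invoke Corollary \ref{cor:Op_infty_loc}, which identifies the composite of the last three arrows with the localization $\RelOpinfty\to\RelOpinfty[\mathrm{loc.eq}^{-1}]\simeq\curlyOpinfty$. It therefore suffices to show that the inclusion $\iota\from\RelOp\hookrightarrow\RelOpinfty$ induces an equivalence $\bar\iota\from\RelOp[\mathrm{loc.eq}^{-1}]\xrightarrow{\sim}\RelOpinfty[\mathrm{loc.eq}^{-1}]$ after localizing both sides at local equivalences.

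Essential surjectivity of $\bar\iota$ is exactly the content of the delocalization theorem of \cite{Pra25}: every $\infty$-operad is of the form $\mathcal{O}[S^{-1}]$ for some $(\mathcal{O},S)\in\RelOp$, so every object of $\RelOpinfty[\mathrm{loc.eq}^{-1}]\simeq\curlyOpinfty$ lies in the essential image of $\bar\iota$.

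For fully faithfulness, I would follow the template of Proposition \ref{prop:deloc_op}, applying the derived mapping space lemma \cite[Theorem 2.2]{ACK25}. Concretely, given $(\mathcal{O},S)\in\RelOp$, construct a cosimplicial object $(\mathcal{O}^\bullet,S^\bullet)\to(\mathcal{O},S)$ inside $\RelOp$ whose image in a fixed model for $\RelOpinfty$ (say $\Rel\Op_{\Kan}$) is a Reedy cofibrant cosimplicial resolution. For any target $(\mathcal{P},T)\in\RelOp$, the derived mapping space lemma then identifies both
\[
\Map_{\RelOp[\mathrm{loc.eq}^{-1}]}((\mathcal{O},S),(\mathcal{P},T)) \quad\text{and}\quad \Map_{\RelOpinfty[\mathrm{loc.eq}^{-1}]}((\mathcal{O},S),(\mathcal{P},T))
\]
with the same sub-$\infty$-groupoid of $\RelOp((\mathcal{O}^\bullet,S^\bullet),(\mathcal{P},T))$, forcing the canonical comparison between them to be an equivalence.

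The main obstacle is producing the required cosimplicial resolution while staying within $\RelOp$. In the setting of Proposition \ref{prop:deloc_op} the resolution came essentially for free from the simplicial model structure on $\Opinfty$, but $\Op$ carries no directly analogous structure; a cofibrant replacement of a discrete operad in $\Op_{\Kan}$ is typically no longer discrete. A natural route is to use a discrete operadic analog of the Boardman--Vogt $W$-construction, or a cosimplicial bar-construction associated to the free--forgetful adjunction between operads and collections, and then verify that the resulting cosimplicial object becomes Reedy cofibrant when embedded in $\Op_{\Kan}$. This is the key technical point that distinguishes the argument from that of Proposition \ref{prop:deloc_op}.
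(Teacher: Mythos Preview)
Your reduction via Corollary \ref{cor:Op_infty_loc} to showing that $\iota\from\RelOp\hookrightarrow\RelOpinfty$ becomes an equivalence after inverting local equivalences is correct, and essential surjectivity does indeed follow from the delocalization theorem of \cite{Pra25}. The gap is in the fully faithfulness step.

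A Reedy cofibrant cosimplicial resolution $(\mathcal{O}^{\bullet},S^{\bullet})\to(\mathcal{O},S)$ built from the model structure on $\Op_{\Kan}$ consists levelwise of Dwyer--Kan weak equivalences, i.e., of \emph{relative} equivalences. The derived mapping space lemma applied to such a resolution, with a target $(\mathcal{P},T)$ whose underlying operad is fibrant, therefore computes
\[
\Map_{\RelOpinfty[\mathrm{rel.eq}^{-1}]}\bigl((\mathcal{O},S),(\mathcal{P},T)\bigr)\;\simeq\;\Map_{\curlyRelOpinfty}\bigl((\mathcal{O},S),(\mathcal{P},T)\bigr),
\]
the sub-$\infty$-groupoid of $\Map_{\curlyOpinfty}(\mathcal{O},\mathcal{P})$ consisting of maps carrying $S$ into $T$ (cf.\ Remark \ref{rem:def:rel_oo-opds}). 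This is \emph{not} the mapping space in $\RelOpinfty[\mathrm{loc.eq}^{-1}]\simeq\curlyOpinfty$, which is $\Map_{\curlyOpinfty}(\mathcal{O}[S^{-1}],\mathcal{P}[T^{-1}])$; the two differ whenever $T$ contains non-invertible maps. To access the latter via resolutions one would need the target to be fibrant for \emph{local} equivalences, i.e., to replace $(\mathcal{P},T)$ by something equivalent to $(\mathcal{P}[T^{-1}],\text{equivalences})$---but such an object generally fails to lie in $\RelOp$, so the computation cannot be carried out inside $\RelOp$ as your argument requires. The difficulty you single out (building the resolution inside $\RelOp$) is thus secondary to this more basic obstruction.

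The paper sidesteps mapping-space computations entirely by exploiting not just the existence of a delocalization but its \emph{functoriality and naturality}. Working with $\Opinfty=\DQOp$, the construction $\mathcal{O}\mapsto\Den/\mathcal{O}$ and the root functor $\rho_{\mathcal{O}}\from\Den/\mathcal{O}\to\mathcal{O}$ of \cite{Pra25} are natural in $\mathcal{O}$. Precomposing with a cofibrant replacement, one gets a functor $\Phi\from\RelOpinfty\to\RelOp$, $\Phi(\mathcal{O},S)=(\Den/\mathcal{O}^{c},\rho_{\mathcal{O}^{c}}^{-1}(S^{c}))$, and the root functor supplies a natural local equivalence $\iota\Phi\Rightarrow\id_{\RelOpinfty}$. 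Restricting this to $\RelOp$ yields the other zig-zag $\Phi\iota\Rightarrow\id_{\RelOp}$, so $\Phi$ is an explicit two-sided homotopy inverse to $\iota$.
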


\begin{proof}
By Corollary \ref{cor:Op_infty_loc}, it suffices to show that
the inclusion $\iota\from\RelOp\to\RelOp_{\infty}$ induces an equivalence of $\infty$-categories
\[
\RelOp[\mathrm{loc.eq}^{-1}]\xrightarrow{\sim}\RelOpinfty[\mathrm{loc.eq}^{-1}].
\]
We prove this by taking $\Opinfty$ to be the category $\DQOp$
of dendroidal quasioperads, and then producing a functor $\Phi \from \RelOp_{\infty}\to \RelOp$ such that $\Phi \circ \iota$ and $\iota \circ \Phi$ are naturally locally equivalent to the identity functors.

The key ingredient is the third author's delocalization of $\infty$-operads:
In \cite[Definition 3.11]{Pra25}, the third author defined,
for each dendroidal quasioperad $\mathcal{O}$, an ordinary operad
$\Den/\mathcal{O}$ equipped with a map
\[
\rho_{X}\from\Den/\mathcal{O}\to\mathcal{O}
\]
called the \textbf{root functor}. The root functor is natural
in $\mathcal{O}\in\DQOp$, and moreover if $\mathcal{O}$ is
cofibrant in the operadic model structure, then its image in
$\curlyOpinfty$ is a localization at the unary operations whose
images in $\mathcal{O}$ are equivalences \cite[Theorem 3.13]{Pra25}. 

With this in mind, we define a functor $\Phi\from\RelOp_{\infty}\to\RelOp$
as follows: Choose a cofibrant replacement functor $\pr -^{c}\from\dSet\to\dSet$;
thus $\pr -^{c}$ is equipped with a natural operadic equivalence
$\pr -^{c}\Rightarrow\mathrm{id}_{\dSet}$. We define $\Phi$ by the
formula
\[
\Phi\pr{\mathcal{O},S}=\pr{\Den/\mathcal{O}^{c},\rho_{\mathcal{O}^{c}}^{-1}\pr{S^{c}}},
\]
where $S^{c}$ denotes the preimage of $S$ under the map $\mathcal{O}^{c}\to\mathcal{O}$.
We claim that $\Phi$ is a homotopy inverse of $\iota$.

The root functor determines a natural transformation $\nu\from\iota\circ\Phi\Rightarrow\id_{\RelOpinfty}$
whose component at each relative operad $(\mathcal{O},S) $ is given by the composite
\[
\pr{\Den/\mathcal{O}^{c},\rho_{\mathcal{O}^{c}}^{-1}\pr{S^{c}}}\xrightarrow{\alpha}\pr{\mathcal{O}^{c},S^{c}}\xrightarrow{\beta}\pr{\mathcal{O},S}.
\]
The map $\alpha$ is a local equivalence because $\rho_{\mathcal{O}^{c}}$
is a localization, and the map $\beta$ is a relative equivalence
and hence a local equivalence. Thus $\nu$ is a natural local
equivalence. Restricting $\nu$ to $\RelOp$, we also get a natural
local equivalence $\Phi\circ\iota\Rightarrow\id_{\RelOp}$, and this completes the proof.
\end{proof}

\section{\label{sec:appl}Application: Harpaz's conjecture}

As an application of Theorem \ref{thm:main}, we give an affirmative answer to the following long-standing open conjecture:

\begin{conj}[{\cite{MO249973}}]
Lurie's \textit{operadic
	nerve functor} $N_{\Lurie}\from\Op_{\Kan}\to\LQOp$ induces an
equivalence of $\infty$-categories upon localizing at weak equivalences.
\end{conj}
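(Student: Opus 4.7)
The plan is to leverage Theorem~\ref{thm:main_intro} together with the Morita-type characterizations of localization (Propositions~\ref{prop:char_loc_Kan} and~\ref{prop:char_loc_Lurie}) in order to circumvent the failure of model-categorical techniques. Since $N_{\Lurie}$ upgrades to a functor of relative categories (following Heuts--Hinich--Moerdijk), it descends to a functor $\overline{N}_{\Lurie}\from \Op_{\Kan}[\mathrm{weq}^{-1}] \to \LQOp[\mathrm{weq}^{-1}]$. Under the identifications $\Op_{\Kan}[\mathrm{weq}^{-1}]\simeq \curlyOpinfty \simeq \LQOp[\mathrm{weq}^{-1}]$ coming from Theorem~\ref{thm:models_oo-operads}, the task reduces to identifying $\overline{N}_{\Lurie}$ with the canonical equivalence between these two presentations of $\curlyOpinfty$.

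The first step is to handle discrete operads. By Theorem~\ref{thm:main_intro}, every object of $\curlyOpinfty$ is equivalent to the operadic localization $\mathcal{O}[S^{-1}]$ of some discrete relative operad $(\mathcal{O},S)\in\RelOp$. On the subcategory $\Op \subset \Op_{\Kan}$ of discrete simplicial operads, $N_{\Lurie}$ restricts to the usual embedding $\Op \hookrightarrow \LQOp$. By the commutativity (up to natural weak equivalence) of square~(1) in diagram~\ref{d:models}, this embedding agrees, under the equivalences of Theorem~\ref{thm:models_oo-operads}, with the canonical inclusion $\Op \hookrightarrow \curlyOpinfty$. Thus $\overline{N}_{\Lurie}$ already matches the canonical equivalence on the essential image of $\Op$.

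To extend this matching to arbitrary objects, I would invoke Propositions~\ref{prop:char_loc_Kan} and~\ref{prop:char_loc_Lurie}, which characterize concrete localizations in $\Op_{\Kan}$ and $\LQOp$ respectively via a Morita-type universal property phrased in terms of mapping categories of algebras. The key step is to verify that $N_{\Lurie}$ preserves this universal property: if $\mathcal{O} \to \mathcal{P}$ exhibits $\mathcal{P}$ as a localization in $\Op_{\Kan}$ of a discrete operad $\mathcal{O}$ at a set $S$ of unary operations, then $N_{\Lurie}(\mathcal{O}) \to N_{\Lurie}(\mathcal{P})$ exhibits $N_{\Lurie}(\mathcal{P})$ as a localization in $\LQOp$. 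Combining this with the discrete case and the essential uniqueness of operadic localization, it would follow that $\overline{N}_{\Lurie}$ realizes the canonical equivalence on every object.

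The main obstacle in this plan is the verification that $N_{\Lurie}$ preserves the Morita-type universal property across the two models. Concretely, one must compare the simplicial mapping objects of algebras inside a locally Kan simplicial operad with the mapping $\infty$-categories of algebras in the corresponding Lurie quasioperad. Because the $(\infty,2)$-categorical enhancement of $\curlyOpinfty$ and its algebra categories $\qAlg_{-}(-)$ are model-independent invariants (Remark~\ref{rem:symmetric monoidal structures on Op infty}), and because $N_{\Lurie}$ is compatible with the forgetful functors to unary operations by square~(1) of diagram~\ref{d:models}, this compatibility should ultimately reduce to tracing through the definitions of the Morita-type property in the two models; still, the bookkeeping will require care.
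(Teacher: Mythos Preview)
Your strategy has the right ingredients but contains a genuine gap in the passage from objects to functors. Showing that $\overline{N}_{\Lurie}$ ``realizes the canonical equivalence on every object'' only establishes pointwise agreement $N_{\Lurie}(\mathcal{O}[S^{-1}])\simeq N_{\Lurie}(\mathcal{O})[S^{-1}]$; it does not produce a \emph{natural} equivalence of functors, and hence does not imply that $\overline{N}_{\Lurie}$ is itself an equivalence. (Knowing $F(X)\simeq G(X)$ for each $X$, with $G$ an equivalence, says nothing about $F$ on morphisms.) Your first paragraph correctly frames the goal as identifying $\overline{N}_{\Lurie}$ with a known equivalence, but the remainder of the argument never supplies the required naturality. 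The paper's fix is to work at the \emph{relative} level: one lifts $N_{\Lurie}$ to $\overline{N}_{\Lurie}\from \Rel\Op_{\Kan}\to \Rel\LQOp$ so that the triangle under $\RelOp$ strictly commutes, and proves that this lift preserves \emph{all} local equivalences, not only localizations of discrete operads (property (ON3)). After localizing at local equivalences, both legs $\RelOp\hookrightarrow\Rel\Op_{\Kan}$ and $\RelOp\hookrightarrow\Rel\LQOp$ become equivalences by Theorem~\ref{thm:main} and Corollary~\ref{cor:Op_infty_loc}, and two-out-of-three in the resulting commutative diagram forces $\overline{N}_{\Lurie}$---hence $N_{\Lurie}$---to be an equivalence. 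Naturality is built in because one is comparing commuting functors out of a common source, not isolated objects.

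You are right to target Propositions~\ref{prop:char_loc_Kan} and~\ref{prop:char_loc_Lurie}; the paper uses them precisely to prove (ON3). But the step you describe as ``tracing through the definitions'' and ``bookkeeping'' hides a substantial external input: to pass from condition~(2) of Proposition~\ref{prop:char_loc_Kan} (a Quillen equivalence involving $\mathscr{L}_S\Alg_{\mathcal{O}}(\sSet)$) to condition~(2) of Proposition~\ref{prop:char_loc_Lurie} (an equivalence involving $\qAlg_{N_{\Lurie}(\mathcal{O})}^S(\mathcal{S})$), one must identify the underlying $\infty$-category of the former with the latter. This is exactly the rectification theorem for algebras over $\infty$-operads (Pavlov--Scholbach, White--Yau), which the paper invokes explicitly; it does not follow from Remark~\ref{rem:symmetric monoidal structures on Op infty} or from compatibility with the underlying-category functors in diagram~\eqref{d:models}.
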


The functor $N_{\Lurie}$ was defined by Lurie in \cite[Definition 2.1.1.23]{HA}. Although it is a fundamental tool to represent locally Kan simplicial operads as $\infty$-operads, to the authors' knowledge it has not been proven to induce an equivalence on homotopy theories, in contrast to its dendroidal counterparts (cf.\ Theorem \ref{thm:models_oo-operads}). A partial step in this direction has been made by Heuts-Hinich-Moerdijk in \cite[Proposition 6.1.1]{HHM:ELMDMIO}, where they prove that $N_{\Lurie}$ induces an equivalence of homotopy theories when restricted to operads \emph{without units} (also called open therein). However, the equivalence has not been proven completely, and Harpaz raised the  conjecture in the MathOverflow post \cite{MO249973}.\\

We shall now give a complete proof of the above conjecture, answering positively Harpaz's question (Theorem \ref{thm:Harpaz}). The proof is based on the following three properties of $N_{\Lurie}$:

\begin{enumerate}[label=(ON\arabic*)]

\item $N_{\Lurie}$ is a functor of relative categories.

\item For a locally Kan simplicial operad $\mathcal{O}$, the set of unary operations of $\mathcal{O}$ can be identified with those of $N_{\Lurie}(\mathcal{O})$.
	Under this identification, the assignment $(\mathcal{O},S)\mapsto (N_{\Lurie}(\mathcal{O}),S)$ determines a functor $\overline{N}_{\Lurie}\from \Rel\Op_{\Kan} \to \Rel \LQOp$ rendering the diagram
\[\begin{tikzcd}
	& {\mathsf{RelOp}} \\
	{\mathsf{RelOp}_{\mathrm{Kan}}} && {\mathsf{RelLQOp}}
	\arrow[from=1-2, to=2-1]
	\arrow[from=1-2, to=2-3]
	\arrow["{\overline{N}_{\mathrm{Lurie}}}", from=2-1, to=2-3]
\end{tikzcd}\]
commutative.

\item $\overline{N}_{\Lurie}$ preserves local equivalences.

\end{enumerate}

Properties (ON1) and (ON2) are immediate from the definitions.
Property (ON3) is nontrivial, and we will prove it later (Theorem
\ref{thm:ON3}). Accepting it for now, we can prove Harpaz's conjecture:
\begin{thm}
\label{thm:Harpaz}The functor $N_{\Lurie}\from\Op_{\Kan}\to\LQOp$
induces an equivalence of $\infty$-categories
\[
\Op_{\Kan}[\mathrm{weq}^{-1}]\xrightarrow{\sim}\LQOp[\mathrm{weq}^{-1}].
\]
\end{thm}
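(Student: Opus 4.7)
The plan is to deduce the theorem by a direct diagram chase from Theorem \ref{thm:main} together with Corollary \ref{cor:Op_infty_loc}, applied separately to the models $\Op_{\Kan}$ and $\LQOp$ of $\Op_{\infty}$. Property (ON3) does the real work (it is the content of Theorem \ref{thm:ON3}, which the paper treats separately); given it, we only need to convert statements about relative operads into statements about operads.

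First, I would record that Corollary \ref{cor:Op_infty_loc}, instantiated with $\Op_\infty = \Op_{\Kan}$ and with $\Op_\infty = \LQOp$, provides equivalences
\[
\Op_{\Kan}[\mathrm{weq}^{-1}]\xrightarrow{\sim}\Rel\Op_{\Kan}[\mathrm{loc.eq}^{-1}]\xrightarrow{\sim}\curlyOpinfty
\]
and
\[
\LQOp[\mathrm{weq}^{-1}]\xrightarrow{\sim}\Rel\LQOp[\mathrm{loc.eq}^{-1}]\xrightarrow{\sim}\curlyOpinfty,
\]
where the first arrow in each line is induced by the minimal relative structure functor. Meanwhile, Theorem \ref{thm:main} says that the inclusion $\RelOp\hookrightarrow \curlyRelOpinfty$ followed by operadic localization realizes $\curlyOpinfty$ as $\RelOp[\mathrm{loc.eq}^{-1}]$. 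Since the latter factors through both $\Rel\Op_{\Kan}$ and $\Rel\LQOp$ (via the forgetful/embedding functors that view a discrete operad as a simplicial operad or as a Lurie quasioperad), both triangles
\[
\RelOp \to \Rel\Op_{\Kan}\to \curlyOpinfty\quad\text{and}\quad \RelOp\to\Rel\LQOp\to\curlyOpinfty
\]
induce equivalences on homotopy categories of relative objects modulo local equivalences.

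Now I apply (ON3): $\overline{N}_{\Lurie}\from \Rel\Op_{\Kan}\to \Rel\LQOp$ preserves local equivalences and hence descends to a functor between the localizations. By (ON2), the triangle
\[\begin{tikzcd}
	& {\RelOp} \\
	{\Rel\Op_{\Kan}[\mathrm{loc.eq}^{-1}]} && {\Rel\LQOp[\mathrm{loc.eq}^{-1}]}
	\arrow[from=1-2, to=2-1]
	\arrow[from=1-2, to=2-3]
	\arrow["{\overline{N}_{\Lurie}}", from=2-1, to=2-3]
\end{tikzcd}\]
commutes. The two slanted arrows become equivalences of $\infty$-categories by the previous paragraph, so by two-out-of-three $\overline{N}_{\Lurie}$ induces an equivalence
\[
\Rel\Op_{\Kan}[\mathrm{loc.eq}^{-1}]\xrightarrow{\sim}\Rel\LQOp[\mathrm{loc.eq}^{-1}].
\]

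Finally, I would pass from the relative picture back to the non-relative one. The minimal relative structure functors $\Op_{\Kan}\hookrightarrow \Rel\Op_{\Kan}$ and $\LQOp\hookrightarrow \Rel\LQOp$ send weak equivalences to local equivalences and induce equivalences of localizations (Corollary \ref{cor:Op_infty_loc}). Moreover, by the identification of unary operations in (ON2), $\overline{N}_{\Lurie}$ carries minimal relative structures to minimal relative structures, so the square
\[\begin{tikzcd}
	{\Op_{\Kan}[\mathrm{weq}^{-1}]} & {\LQOp[\mathrm{weq}^{-1}]} \\
	{\Rel\Op_{\Kan}[\mathrm{loc.eq}^{-1}]} & {\Rel\LQOp[\mathrm{loc.eq}^{-1}]}
	\arrow["{N_{\Lurie}}", from=1-1, to=1-2]
	\arrow[from=1-1, to=2-1]
	\arrow[from=1-2, to=2-2]
	\arrow["{\overline{N}_{\Lurie}}"', from=2-1, to=2-2]
\end{tikzcd}\]
commutes, with both vertical arrows and the bottom arrow equivalences. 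Hence the top arrow is an equivalence, which is the desired statement. The main obstacle in this argument is, of course, establishing (ON3); once that is in hand, the rest is a formal consequence of Theorem \ref{thm:main} and Corollary \ref{cor:Op_infty_loc}.
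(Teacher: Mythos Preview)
Your proof is correct and follows essentially the same approach as the paper: both assemble the commutative diagram with $\RelOp[\mathrm{loc.eq}^{-1}]$ at the top, $\Rel\Op_{\Kan}[\mathrm{loc.eq}^{-1}]$ and $\Rel\LQOp[\mathrm{loc.eq}^{-1}]$ in the middle row connected by $\overline{N}_{\Lurie}$, and $\Op_{\Kan}[\mathrm{weq}^{-1}]$ and $\LQOp[\mathrm{weq}^{-1}]$ at the bottom, then invoke Theorem~\ref{thm:main}, Corollary~\ref{cor:Op_infty_loc}, and (ON1)--(ON3) to conclude by two-out-of-three. You are slightly more explicit than the paper in justifying the commutativity of the bottom square (that $\overline{N}_{\Lurie}$ preserves minimal relative structures), but this is a routine detail rather than a different idea.
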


\begin{proof}
By properties (ON1) through (ON3), there is a diagram of $\infty$-categories
\[\begin{tikzcd}
	& {\mathsf{RelOp}[\mathrm{loc.eq}^{-1}]} \\
	{\mathsf{RelOp}_{\mathrm{Kan}}[\mathrm{loc.eq}^{-1}]} && {\mathsf{RelLQOp}[\mathrm{loc.eq}^{-1}]} \\
	{\mathsf{Op}_{\mathrm{Kan}}[\mathrm{weq}^{-1}]} && {\mathsf{LQOp}[\mathrm{weq}^{-1}]}
	\arrow["\sim" sloped, from=1-2, to=2-1]
	\arrow["\sim" sloped, from=1-2, to=2-3]
	\arrow[from=2-1, to=2-3]
	\arrow["\wr", from=3-1, to=2-1]
	\arrow[from=3-1, to=3-3]
	\arrow["\wr"', from=3-3, to=2-3]
\end{tikzcd}\]
which commutes up to natural equivalence. The arrows labeled
with $\sim$ are all equivalences by 
Corollary \ref{cor:Op_infty_loc} and Theorem \ref{thm:main}. Thus, the remaining arrows
must also be equivalences. In particular, the bottom one is an
equivalence.
\end{proof}
\begin{rem}
The proof of Theorem \ref{thm:Harpaz} showcases the utility
of relative operads: Since every operad can be interpreted as
an $\infty$-operad, and since localization makes sense in any
model of $\infty$-operads, relative operads serve as a very
convenient ``hub'' for comparing different models of $\infty$-operads.
We expect that our method will be useful in comparing various
models of $\infty$-operads, of which there are many.
\end{rem}

Before delving into the proof, let us state an immediate yet useful corollary. 
Let $(\delta, \lambda)$ be the adjoint equivalence between Lurie's $\infty$-operads and complete dendroidal Segal spaces constructed by Hinich-Moerdijk \cite{HM24} (see also Remark \ref{rem:symmetric monoidal structures on Op infty}).  Then, we observe that $(\delta,\lambda)$ intertwines nerve constructions:
\begin{cor} 
	There is a pair of natural equivalences  
$$
\delta \circ N_\Lurie \simeq N_d \qquad \text{and} \qquad  \lambda\circ N_{d}\simeq N_{\Lurie}.
$$
\end{cor}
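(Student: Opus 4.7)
The plan is to reduce the pair of equivalences to the first one and then to a verification on ordinary operads. The second equivalence $\lambda\circ N_{d}\simeq N_{\Lurie}$ is formally equivalent to the first $\delta\circ N_{\Lurie}\simeq N_{d}$ via the adjoint equivalence $(\delta,\lambda)$: applying $\lambda$ to the first and using $\lambda\delta\simeq\id$ yields the second. So it suffices to prove $\delta\circ N_{\Lurie}\simeq N_{d}$.

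Under the identifications $\LQOp[\mathrm{weq}^{-1}]\simeq\CDSS[\mathrm{weq}^{-1}]\simeq\DQOp[\mathrm{weq}^{-1}]\simeq\curlyOpinfty$ provided by Theorem \ref{thm:models_oo-operads}, both $\delta\circ N_{\Lurie}$ and $N_{d}$ descend to functors $\Op_{\Kan}[\mathrm{weq}^{-1}]\to\curlyOpinfty$. Both are equivalences: the first by Theorem \ref{thm:Harpaz} combined with the Hinich--Moerdijk equivalence, the second by Theorem \ref{thm:models_oo-operads}. Restricted to the full subcategory $\Op\subset\Op_{\Kan}$ of discrete simplicial operads, the commutativity up to natural weak equivalence of diagram \eqref{d:models} implies that both functors are naturally equivalent to the canonical embedding $\Op\hookrightarrow\curlyOpinfty$; in particular, they agree on $\Op$.

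To propagate this agreement to all of $\Op_{\Kan}[\mathrm{weq}^{-1}]$, I would invoke Theorem \ref{thm:main}: since $\curlyOpinfty\simeq\RelOp[\mathrm{loc.eq}^{-1}]$, every $\infty$-operad arises as an operadic localization of an ordinary operad. Both functors extend, via properties (ON1)--(ON3) for $N_{\Lurie}$ and their evident analogues for $N_{d}$, to functors of relative $\infty$-operads that preserve local equivalences, and hence commute with operadic localization. Since they agree on $\Op$ and send each localization presentation $\mathcal{O}[S^{-1}]$ to the corresponding localization in $\curlyOpinfty$, they must agree globally. The main obstacle lies in this propagation step---turning object-level agreement into a natural equivalence---which is most cleanly handled via the $(\infty,2)$-universal property of operadic localization from Remark \ref{rem:operadic_loc}, viewing both functors as morphisms of $(\infty,2)$-categorical data out of $\RelOp$ and transferring the equivalence through the main theorem.
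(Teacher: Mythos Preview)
Your approach differs substantially from the paper's, which is essentially a one-liner: since $N_{\Lurie}$, $N_d$, and $\delta$ all induce equivalences after localization (Theorems \ref{thm:Harpaz} and \ref{thm:models_oo-operads}), the composite $N_d^{-1}\circ(\delta\circ N_{\Lurie})$ is an autoequivalence of $\curlyOpinfty$; the rigidity theorem of \cite{AGG} says that $\curlyOpinfty$ has no nontrivial autoequivalences, so this composite is equivalent to the identity and the claim follows. No check on discrete operads and no propagation argument are needed.

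Your route has a genuine gap exactly where you flag it. First, diagram \eqref{d:models} does not give what you claim: it records compatibility of the \emph{forgetful} functors $U_i$ with the various comparison functors, not compatibility of the embeddings of $\Op$ into the different models. Establishing that $q\circ\delta\circ N_{\Lurie}|_{\Op}\simeq N_d|_{\Op}$ is a separate (if tractable) computation. Second, even granting agreement on $\Op$, ``both functors commute with localization and agree on generators'' does not by itself produce a natural equivalence of functors out of $\Op_{\Kan}[\mathrm{weq}^{-1}]$; you would need to exhibit both $\delta\circ N_{\Lurie}$ and $N_d$ as arising from the \emph{same} arrow out of $\RelOp[\mathrm{loc.eq}^{-1}]$ in diagrams of the shape used to prove Theorem \ref{thm:Harpaz}. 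That is, you need compatible analogues of (ON2) for both nerves with a common top triangle, not merely separate ones. This can likely be made to work, but it is real content, and appealing to the $(\infty,2)$-universal property of localization does not resolve the coherence issue on its own. The rigidity argument bypasses all of this at the cost of importing \cite{AGG}.
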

\begin{proof}
	This follows from Theorem \ref{thm:Harpaz} and the fact that the $\infty$-category of $\infty$-operads only has trivial automorphisms \cite{AGG}.
\end{proof}

We now turn to the proof of (ON3). Let us recall what it says:
\begin{thm}
[Property (ON3)]\label{thm:ON3} The functor
\[
\overline{N}_{\Lurie}\from\Rel\Op_{\Kan}\to\Rel\LQOp,\,\pr{\mathcal{O},S}\mapsto\pr{N_{\Lurie}\pr{\mathcal{O}},S}
\]
preserves local equivalences. 
\end{thm}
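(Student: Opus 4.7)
My plan is to prove (ON3) by establishing a natural equivalence between the two functors $\Rel\Op_{\Kan}\rightrightarrows\curlyRelOpinfty$ given by the canonical localization $j_{\Op_{\Kan}}$ and by the composite $j_{\LQOp}\circ \overline{N}_{\Lurie}$. Once such an equivalence is in hand, applying the operadic localization $L\from \curlyRelOpinfty\to \curlyOpinfty$ yields a natural equivalence $L\pr{\mathcal{O},S}\simeq L\pr{N_{\Lurie}\pr{\mathcal{O}},S}$, which immediately forces $L\pr{\overline{N}_{\Lurie}\pr{f}}$ to be an equivalence whenever $L\pr{f}$ is; hence $\overline{N}_{\Lurie}$ preserves local equivalences. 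Note that $\overline{N}_{\Lurie}$ does induce a functor between the relevant $\infty$-categories in the first place, because $N_{\Lurie}$ preserves weak equivalences by (ON1) and so $\overline{N}_{\Lurie}$ preserves $\mathrm{rel.eq}$; Proposition~\ref{prop:deloc_op} then delivers the descended functor.

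To construct the natural equivalence, I exploit the pullback description $\curlyRelOpinfty\simeq\curlyRelCatinfty\times_{\curlyCatinfty}\curlyOpinfty$ from Definition~\ref{def:rel_oo-opds}. This reduces the problem to producing, naturally in $\mathcal{O}\in \Op_{\Kan}$, a natural equivalence $\mathcal{O}\simeq N_{\Lurie}\pr{\mathcal{O}}$ in $\curlyOpinfty$ together with a compatible natural equivalence $U\pr{\mathcal{O}}\simeq U\pr{N_{\Lurie}\pr{\mathcal{O}}}$ in $\curlyCatinfty$; compatibility with the subcategory $S$ is then automatic by property (ON2), since the same set $S$ is used on both sides. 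The $\infty$-categorical half follows from the commutative square (1) of diagram~(\ref{d:models}) combined with the standard identification $U_{1}\circ N_{\Lurie}\simeq N_{\hc}\circ U_{4}$, expressing that the underlying $\infty$-category of Lurie's operadic nerve is the homotopy coherent nerve of the simplicial category of unary operations. The operadic half can be built through the zigzag $\Op_{\Kan}\xrightarrow{N_{d}}\DQOp\xleftarrow{q}\CDSS\xleftarrow{\delta}\LQOp$ by producing a natural weak equivalence
\[
q\pr{\delta\pr{N_{\Lurie}\pr{\mathcal{O}}}}\simeq N_{d}\pr{\mathcal{O}}
\]
in $\DQOp$: since both $q$ and $N_{d}$ induce equivalences onto $\curlyOpinfty$ by Theorem~\ref{thm:models_oo-operads}, such a natural weak equivalence transports into the required equivalence in $\curlyOpinfty$.

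The main obstacle is the construction of this last natural weak equivalence for arbitrary locally Kan simplicial operads. For discrete operads $\mathcal{O}\in \Op\subset \Op_{\Kan}$ both sides reduce by direct inspection to the classical dendroidal nerve, so the comparison is essentially tautological there. Extending to general $\mathcal{O}$ is the delicate point; a natural approach is to resolve $\mathcal{O}$ by a simplicial diagram of discrete operads---e.g.\ via the bar construction for the free--forgetful adjunction between operads and symmetric sequences---and exploit that both sides, being homotopy-invariant and compatible with such resolutions, must then agree on $\mathcal{O}$. A conceptually cleaner alternative, more in the spirit of the paper's overall strategy, is to invoke the Morita-type universal properties of localization from Propositions~\ref{prop:char_loc_Kan} and~\ref{prop:char_loc_Lurie}: these characterize local equivalences in both $\Rel\Op_{\Kan}$ and $\Rel\LQOp$ in terms of induced equivalences of algebra $\infty$-categories, and the bridge between the two is furnished by the essentially formal compatibility $\qAlg_{N_{\Lurie}\pr{\mathcal{O}}}\pr{N_{\Lurie}\pr{\mathcal{Q}}}\simeq N_{\hc}\pr{\qAlg_{\mathcal{O}}\pr{\mathcal{Q}}}$, encoding the fact that $N_{\Lurie}$ is fully faithful on enriched mapping categories.
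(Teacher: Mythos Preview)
Your primary strategy---constructing a natural equivalence $q\circ\delta\circ N_{\Lurie}\simeq N_{d}$ of functors $\Op_{\Kan}\to\DQOp$---is not a proof of (ON3) but rather a direct proof of Harpaz's conjecture itself: once you know the two compositions into $\curlyOpinfty$ agree, $N_{\Lurie}$ is automatically an equivalence after localization, since $N_{d}$ already is. In the paper, (ON3) is precisely the hard lemma \emph{from which} Harpaz's conjecture is deduced, so your plan replaces the lemma by something at least as strong as the theorem, and you do not actually supply a proof of it. The bar-resolution sketch is not convincing: neither $N_{d}$ nor $q\circ\delta\circ N_{\Lurie}$ is known to commute with geometric realizations of simplicial objects in $\Op_{\Kan}$ in any sense strong enough to make the descent argument go through.

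Your ``conceptually cleaner alternative'' via Propositions~\ref{prop:char_loc_Kan} and~\ref{prop:char_loc_Lurie} is exactly the route the paper takes, but you mislocate the difficulty. The bridge between the two characterizations is \emph{not} formal: it is the rectification theorem for algebras over $\infty$-operads (cited in the paper as \cite[Theorem~7.11]{PS18} or \cite[Theorem~7.3.1]{WY24}), which identifies the underlying $\infty$-category of $\mathscr{L}_{S}\Alg_{\mathcal{O}}(\sSet)$ with $\qAlg^{S}_{N_{\Lurie}(\mathcal{O})}(\mathcal{S})$. Your proposed justification---``$N_{\Lurie}$ is fully faithful on enriched mapping categories''---is again essentially Harpaz's conjecture, so invoking it here is circular. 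The paper's proof works because rectification is a genuinely independent input, proved by model-categorical methods that do not presuppose the comparison of homotopy theories you are after.
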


We will prove Theorem \ref{thm:ON3} by characterizing localizations
in $\Op_{\Kan}$ and $\LQOp$ by Morita-type universal properties
(Propositions \ref{prop:char_loc_Kan} and \ref{prop:char_loc_Lurie}),
and then showing that these universal properties are equivalent.

We first characterize localizations of simplicial operads.
\begin{notation}
Let $\pr{\mathcal{O},S}$ be a relative locally Kan simplicial
operad, where $\mathcal{O}$ is $\Sigma$-cofibrant. Recall that
the category $\Alg_{\mathcal{O}}\pr{\sSet}$ of $\mathcal{O}$-algebras
in the category of simplicial sets admits a projective model
structure \cite[Theorem 13.29]{HeutsMoerdijk22}. We write $\mathscr{L}_{S}\Alg_{\mathcal{O}}\pr{\sSet}$
for its left Bousfield localization whose fibrant objects are
the $\mathcal{O}$-algebras carrying each map in $S$ to a weak
homotopy equivalence of simplicial sets. (The left Bousfield
localization may no longer be a model category, but it always
exists as a left semi-model category \cite[Theorem A]{BW24}). One way to construct $\mathscr{L}_{S}\Alg_{\mathcal{O}}\pr{\sSet}$ is to choose as localizing set for the Bousfield localization the image of $S$ through the composition
$$
\begin{tikzcd}
	U(\mathcal{O})^{\op}\ar[r] & \mathsf{Fun}(U(\mathcal{O}),\sSet)\ar[r] & \mathsf{Alg}_{\mathcal{O}}(\sSet),
\end{tikzcd} 
$$
where the first map is the coYoneda embedding, and the second one is given by operadic left Kan extension along $U(\mathcal{O})\to \mathcal{O}$ (which is a left Quillen functor by \cite[Theorem 14.43]{HeutsMoerdijk22}).
\end{notation}

\begin{prop}
\label{prop:char_loc_Kan}Let $\pr{\mathcal{O},S}$ be a relative
locally Kan simplicial operad, and let $\gamma\from\mathcal{O}\to\mathcal{P}$
be a map of locally Kan simplicial operads. Suppose that:
\begin{itemize}
\item $\gamma$ carries each map in $S$ to an equivalence;
\item $\mathcal{P}$ is $\Sigma$-cofibrant (hence so is $\mathcal{O}$).
\end{itemize}
Then the following conditions are equivalent:
\begin{enumerate}
\item The image of the map $\gamma$ in $\curlyOpinfty$ exhibits $\mathcal{P}$
as a localization of $\mathcal{O}$ at $S$.
\item The functor $U\pr{\gamma}\from U\pr{\mathcal{O}}\to U\pr{\mathcal{P}}$
is essentially surjective, and the adjunction
\[
\begin{tikzcd}
	\gamma_{!}\from\mathscr{L}_{S}\pr{\Alg_{\mathcal{O}}\pr{\sSet}} \ar[r, shift left=1.8]\ar[r,leftarrow, shift right=1.8,"\scalebox{0.8}{$\perp$}"] & \Alg_{\mathcal{P}}\pr{\mathsf{sSet}}\from\gamma^{*}
\end{tikzcd}
\]
is a Quillen equivalence of semi-model categories. 
\end{enumerate}
\end{prop}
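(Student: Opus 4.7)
The plan is to reduce the comparison to a statement about underlying $\infty$-categories and then exploit the $(\infty,2)$-universal property of operadic localization (Remark~\ref{rem:operadic_loc}). Under the $\Sigma$-cofibrancy assumption on $\mathcal{O}$, the projective model structure on $\Alg_{\mathcal{O}}(\sSet)$ presents the $\infty$-category $\qAlg_{\mathcal{O}}(\mathcal{S})$. The localizing set chosen in the construction---the image of $S$ under the coYoneda embedding followed by operadic left Kan extension---has the property that an algebra $A$ is fibrant in $\mathscr{L}_{S}\Alg_{\mathcal{O}}(\sSet)$ if and only if $A(X)\to A(Y)$ is a weak equivalence for every $s\from X\to Y$ in $S$, since the derived mapping space $\Map(\mathrm{Free}_{X}^{\mathcal{O}},A)$ computes $A(X)$. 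Hence $\mathscr{L}_{S}\Alg_{\mathcal{O}}(\sSet)$ presents the reflective subcategory $\qAlg_{\mathcal{O}}^{S}(\mathcal{S})\subseteq\qAlg_{\mathcal{O}}(\mathcal{S})$ of $S$-local algebras, and the Quillen adjunction $\gamma_{!}\dashv\gamma^{*}$ is a Quillen equivalence of semi-model categories if and only if the induced functor $\gamma^{*}\from\qAlg_{\mathcal{P}}(\mathcal{S})\to\qAlg_{\mathcal{O}}^{S}(\mathcal{S})$ is an equivalence of $\infty$-categories.

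For the direction (1)$\Rightarrow$(2), essential surjectivity of $U(\gamma)$ is immediate from Remark~\ref{rem:loc_esssurj}, while the Quillen equivalence follows by specializing Remark~\ref{rem:operadic_loc} to $\mathcal{Q}=\mathcal{S}$: condition (1) yields directly that $\gamma^{*}\from\qAlg_{\mathcal{P}}(\mathcal{S})\to\qAlg_{\mathcal{O}}^{S}(\mathcal{S})$ is an equivalence.

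For the direction (2)$\Rightarrow$(1), let $\eta\from\mathcal{O}[S^{-1}]\to\mathcal{P}$ denote the canonical map induced by the universal property of operadic localization, so that $\gamma$ factors as $\mathcal{O}\to\mathcal{O}[S^{-1}]\xrightarrow{\eta}\mathcal{P}$. It suffices to prove that $\eta$ is an equivalence in $\curlyOpinfty$. Essential surjectivity of $U(\eta)$ follows from that of $U(\gamma)$ together with Remark~\ref{rem:loc_esssurj} applied to the universal localization $\mathcal{O}\to\mathcal{O}[S^{-1}]$. Moreover, the first paragraph and Remark~\ref{rem:operadic_loc} (applied to both $\gamma$ and the universal localization) imply that $\eta^{*}\from\qAlg_{\mathcal{P}}(\mathcal{S})\xrightarrow{\sim}\qAlg_{\mathcal{O}[S^{-1}]}(\mathcal{S})$ is an equivalence. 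The remaining step---which I expect to be the main obstacle---is a Morita-type recognition principle for $\infty$-operads: a map of $\infty$-operads essentially surjective on colors and inducing an equivalence on $\mathcal{S}$-valued algebras is itself an equivalence. The plan to establish this is to reconstruct the multi-mapping spaces $\mathrm{Mul}_{\mathcal{P}}(X_{1},\ldots,X_{n};Y)$ as summands of the values at $Y$ of free $\mathcal{P}$-algebras on finitely many generating colors $X_{1},\ldots,X_{n}$; since such values are preserved under any equivalence of algebra $\infty$-categories compatible with the forgetful functors to $\mathcal{S}^{\mathrm{colors}}$, essential surjectivity on colors then forces all multi-mapping spaces of $\eta$ to be equivalences, so that $\eta$ is an equivalence of $\infty$-operads.
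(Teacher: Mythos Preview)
Your outline is coherent and takes a genuinely different route from the paper. You pass immediately to underlying $\infty$-categories via a rectification step and then invoke a Morita-type reconstruction principle; the paper instead stays entirely at the model-categorical level, building an explicit simplicial model of $\mathcal{O}[S^{-1}]$, applying the Dwyer--Kan characterization of weak equivalences of simplicial operads \cite[Theorem~1.5]{CG20} to reduce to showing $\mathscr{L}_S\Alg_{\mathcal{O}}(\sSet)\simeq\Alg_{\mathcal{O}[S^{-1}]}(\sSet)$, and then verifying this via dendroidal straightening--unstraightening \cite{Heu11,HeutsMoerdijk22} together with \cite[Proposition~4.4]{Pra25}. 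Incidentally, the recognition principle you isolate as the ``main obstacle'' is exactly Hinich--Moerdijk's reconstruction theorem \cite[Theorem~4.1.1]{HM24}, which the paper invokes in the companion Proposition~\ref{prop:char_loc_Lurie}; you need not reprove it from free algebras.

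There is, however, a real circularity hazard in your first sentence. You assert that $\Alg_{\mathcal{O}}(\sSet)$ presents $\qAlg_{\mathcal{O}}(\mathcal{S})$, where the right-hand side is the mapping $\infty$-category in $\curlyOpinfty$ and the simplicial operad $\mathcal{O}$ is sent to $\curlyOpinfty$ via $N_d$. The available rectification theorems \cite[Theorem~7.11]{PS18}, \cite[Theorem~7.3.1]{WY24} identify $\Alg_{\mathcal{O}}(\sSet)[\mathrm{weq}^{-1}]$ with $\qAlg_{N_{\Lurie}(\mathcal{O})}(\mathcal{S})$ instead, and matching $N_{\Lurie}(\mathcal{O})$ with the image of $\mathcal{O}$ under $N_d$ in $\curlyOpinfty$ is precisely Harpaz's conjecture (Theorem~\ref{thm:Harpaz}). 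Since the paper uses Proposition~\ref{prop:char_loc_Kan} as an input to that theorem, your argument would be circular in that role. The paper's model-categorical route via \cite{CG20} is designed to avoid exactly this; if you want to keep your strategy, you must justify the rectification step entirely within the dendroidal world and make that explicit.
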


\begin{proof}
We start by constructing a concrete model of $\mathcal{O}[S^{-1}]$.
Let $J$ denote the groupoid generated by the poset $[1]=\{0<1\}$.
Consider the commutative diagram
\[\begin{tikzcd}
	A & {\coprod_{S}[1]} & {\mathcal{O}} \\
	B & {\coprod_{S}J} & {\mathcal{P},}
	\arrow["\sim", from=1-1, to=1-2]
	\arrow["f"', from=1-1, to=2-1]
	\arrow[from=1-2, to=1-3]
	\arrow[from=1-2, to=2-2]
	\arrow["\gamma", from=1-3, to=2-3]
	\arrow["\sim"', from=2-1, to=2-2]
	\arrow[from=2-2, to=2-3]
\end{tikzcd}\]where $f$ is a cofibration between cofibrant objects in the Dwyer--Kan
model structure on the category of simplicial operads, and the
arrows labeled with $\sim$ are weak equivalences in this model
structure. We factor the map $\mathcal{O}\amalg_{A}B\to\mathcal{P}$
as a trivial cofibration followed by a fibration, and write $\mathcal{O}\xrightarrow{\alpha}\mathcal{Q}\xrightarrow{\beta}\mathcal{P}$
for the resulting factorization. By \cite[Theorem 8.7]{CM13b},
$\alpha$ exhibits $\mathcal{Q}$ as a localization at $S$,
so we write $\mathcal{Q}=\mathcal{O}[S^{-1}]$. 

According to \cite[Theorem 1.5]{CG20}, condition (1) is equivalent
to the following condition:
\begin{itemize}
\item [(1$'$)]The functor $U\pr{\beta}$ is essentially surjective,
and the adjunction
\[
\begin{tikzcd}
	\beta_{!}\from\Alg_{\mathcal{O}[S^{-1}]}\pr{\sSet} \ar[r, shift left=1.8]\ar[r,leftarrow, shift right=1.8,"\scalebox{0.8}{$\perp$}"] & \Alg_{\mathcal{P}}\pr{\mathsf{sSet}}\from\beta^{*}
\end{tikzcd}
\]
is a Quillen equivalence.
\end{itemize}
Now $U\pr{\alpha}$ is essentially surjective (Remark \ref{rem:loc_esssurj}),
so $U\pr{\beta}$ is essentially surjective if and only if $U\pr{\gamma}$
has this property. Therefore, to prove the equivalence of (1$'$)
and (2), it suffices to prove the following: 
\begin{itemize}
\item [($\ast$)]The adjunction
\[
\begin{tikzcd}
	\alpha_{!}\from\mathscr{L}_{S}\Alg_{\mathcal{O}}\pr{\mathsf{sSet}} \ar[r, shift left=1.8]\ar[r,leftarrow, shift right=1.8,"\scalebox{0.8}{$\perp$}"] & \Alg_{\mathcal{O}[S^{-1}]}\pr{\sSet}\from\alpha^{*}
\end{tikzcd}
\]
is a Quillen equivalence.
\end{itemize}

To prove ($\ast$), we recall from \cite[Theorem 2.3]{Heu11}
that, for every dendroidal set $X$, the category $\dSet_{/X}$
admits a combinatorial simplicial model structure called the
\textbf{covariant model structure}, whose fibrant objects are
the dendroidal left fibrations over $X$. When $X$ is normal
(i.e., cofibrant in the operadic model structure), \cite[Theorem 2.7]{Heu11}
and \cite[Theorem 14.43]{HeutsMoerdijk22} yield a Quillen equivalence

\begin{equation}\label{str-unstr}
\begin{tikzcd}
	\mathrm{St}\from\dSet_{/N_{d}\pr{\mathcal{O}}} \ar[r, shift left=1.8]\ar[r,leftarrow, shift right=1.8,"\scalebox{0.8}{$\perp$}"] & \Alg_{\mathcal{O}}\pr{\sSet}\from\mathrm{Un}.
\end{tikzcd}
\end{equation}
This fits into a diagram
\[\begin{tikzcd}
	{\mathscr{L}_S(\mathsf{dSet}_{/N_d(\mathcal{O})})} & {\mathscr{L}_S(\operatorname{Alg}_{\mathcal{O}}(\mathsf{sSet}))} \\
	{\mathsf{dSet}_{/N_d(\mathcal{O}[S^{-1}])}} & {\operatorname{Alg}_{\mathcal{O}[S^{-1}]}(\mathsf{sSet})}
	\arrow["{\mathrm{St}}", from=1-1, to=1-2]
	\arrow["{N_d(\alpha)_!}"', from=1-1, to=2-1]
	\arrow["{\alpha_!}", from=1-2, to=2-2]
	\arrow["{\mathrm{St}}"', from=2-1, to=2-2]
\end{tikzcd}\]commuting up to natural isomorphism, where $\mathscr{L}_{S}\pr{\dSet_{/N_{d}\pr{\mathcal{O}}}}$
denotes the left Bousfield localization of the covariant model
structure whose fibrant objects are the $S$-local left fibrations
\cite[Definition 4.3]{Pra25}.
The functor $N_{d}\pr{\alpha}_{!}$ is a left Quillen equivalence
by \cite[Proposition 4.4]{Pra25}, and the bottom horizontal arrow is a left Quillen equivalence by (\ref{str-unstr}) applied to $\mathcal{O}[S^{-1}]$. It thus suffices to show that the top horizontal arrow is a Quillen equivalence.
Reasoning as in \cite[Proposition 4.9]{Pra25}, one sees that  $\mathrm{Un}$ preserves and detects fibrant $S$-locally constant algebras, i.e., local objects in $\mathscr{L}_{S}\pr{\mathsf{Alg}_{\mathcal{O}}(\sSet)}$. This follows from the equivalence between the fiber of the unstraightening of an algebra and evaluation, $ \mathrm{Un}(F)_a \simeq F(a)$. By a standard argument with (semi)model categories (see, e.g., \cite[\emph{Errata}, Lemma 3]{HeutsMoerdijk22} for the model categorical version), the Quillen equivalence \ref{str-unstr} descends to the left Bousfield localizations, concluding the proof. 

\end{proof}
Next, we turn to the characterization of localization in Lurie's
model. 
\begin{prop}
\label{prop:char_loc_Lurie}Let $\pr{\mathcal{O},S}$ be a relative
Lurie quasioperad, and let $\gamma\from\mathcal{O}\to\mathcal{P}$
be a map of Lurie quasioperads carrying each map in $S$ to an
equivalence of $U\pr{\mathcal{P}}$. The following conditions
are equivalent:
\begin{enumerate}
\item The image of the map $\gamma$ in $\curlyOpinfty$ exhibits $\mathcal{P}$
as a localization of $\mathcal{O}$ at $S$.

\item The functor $U\pr{\gamma}\from U\pr{\mathcal{O}}\to U\pr{\mathcal{P}}$
is essentially surjective, and the functor
\[
\qAlg_{\mathcal{P}}\pr{\mathcal{S}}\to\qAlg_{\mathcal{O}}^{S}\pr{\mathcal{S}}
\]
is a categorical equivalence. Here $\mathcal{S}$ denotes the
cartesian symmetric monoidal $\infty$-category of $\infty$-groupoids.
\end{enumerate}
\end{prop}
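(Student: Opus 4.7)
The implication (1) $\Rightarrow$ (2) is immediate from the general theory of operadic localization: essential surjectivity of $U\pr{\gamma}$ is Remark \ref{rem:loc_esssurj}, while the categorical equivalence $\qAlg_{\mathcal{P}}\pr{\mathcal{S}} \to \qAlg_{\mathcal{O}}^S\pr{\mathcal{S}}$ is a direct instance of the $\pr{\infty,2}$-universal property of operadic localization from Remark \ref{rem:operadic_loc}, applied to the $\infty$-operad $\mathcal{Q} = \mathcal{S}$ (viewed via its cartesian symmetric monoidal structure).

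For the converse (2) $\Rightarrow$ (1), my plan is to compare $\gamma$ with the universal operadic localization and reduce to a Morita-type detection lemma. Let $\pi\from \mathcal{O} \to \mathcal{O}[S^{-1}]$ be an operadic localization, produced by Proposition \ref{prop:opr_loc}. Since $\gamma$ inverts $S$, it factors (essentially uniquely, by the universal property in Definition \ref{def:loc}) as $\gamma \simeq \chi \circ \pi$ for some $\chi\from \mathcal{O}[S^{-1}] \to \mathcal{P}$; it then suffices to show that $\chi$ is an equivalence of $\infty$-operads. Combining the already-established (1) $\Rightarrow$ (2) for $\pi$ with hypothesis (2) for $\gamma$, the map $\chi$ satisfies: $U\pr{\chi}$ is essentially surjective (by two-out-of-three applied to $U\pr{\gamma} = U\pr{\chi} \circ U\pr{\pi}$, together with Remark \ref{rem:loc_esssurj} for $\pi$), and $\chi^{*}\from \qAlg_{\mathcal{P}}\pr{\mathcal{S}} \to \qAlg_{\mathcal{O}[S^{-1}]}\pr{\mathcal{S}}$ is a categorical equivalence.

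The proposition is thereby reduced to the following detection lemma: if $\chi\from \mathcal{A} \to \mathcal{B}$ is a map of $\infty$-operads such that $U\pr{\chi}$ is essentially surjective and $\chi^{*}\from \qAlg_{\mathcal{B}}\pr{\mathcal{S}} \to \qAlg_{\mathcal{A}}\pr{\mathcal{S}}$ is a categorical equivalence, then $\chi$ itself is an equivalence of $\infty$-operads. My proposed strategy for the lemma is to pass to symmetric monoidal envelopes: using the identification $\qAlg_{\mathcal{O}}\pr{\mathcal{C}} \simeq \Fun^{\otimes}\pr{\mathrm{Env}\pr{\mathcal{O}}, \mathcal{C}}$ for symmetric monoidal $\infty$-categories $\mathcal{C}$, the hypothesis translates into an equivalence $\mathrm{Env}\pr{\chi}^{*}\from \Fun^{\otimes}\pr{\mathrm{Env}\pr{\mathcal{B}}, \mathcal{S}^{\sqcup}} \xrightarrow{\sim} \Fun^{\otimes}\pr{\mathrm{Env}\pr{\mathcal{A}}, \mathcal{S}^{\sqcup}}$. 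Via Day convolution, $\Fun^{\otimes}\pr{-, \mathcal{S}^{\sqcup}}$ realizes the Day-convolution presheaf category, so one can recover the envelopes themselves as the subcategories of representable presheaves; combined with essential surjectivity on colors (which follows from essential surjectivity of $U\pr{\chi}$, since $\mathrm{Env}\pr{\mathcal{O}}$ is generated under $\otimes$ by the colors of $\mathcal{O}$), this forces $\mathrm{Env}\pr{\chi}$ to be a symmetric monoidal equivalence, whence $\chi$ is an equivalence of $\infty$-operads.

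The main obstacle lies in making the Day-convolution/Yoneda step precise, i.e., in pinpointing the exact sense in which $\mathcal{S}^{\sqcup}$-valued symmetric monoidal functors recover a small symmetric monoidal $\infty$-category from its category of presheaves. An appealing alternative, closer in spirit to the proof of Proposition \ref{prop:char_loc_Kan}, would be to invoke Lurie's operadic straightening/unstraightening equivalence to identify $\qAlg_{\mathcal{O}}\pr{\mathcal{S}}$ with the $\infty$-category of operadic left fibrations over $\mathcal{O}$, and then adapt the Bousfield-localization-of-covariant-model-structure argument used there to the preoperadic model structure. Either route should lead to the detection lemma, and hence to the proposition.
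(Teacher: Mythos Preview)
Your reduction is exactly the paper's: factor $\gamma$ through the universal localization $\mathcal{O}\to\mathcal{O}[S^{-1}]\to\mathcal{P}$ and reduce to the detection lemma that a map $\chi$ of $\infty$-operads which is essentially surjective on colors and induces an equivalence $\qAlg_{\mathcal{B}}(\mathcal{S})\xrightarrow{\sim}\qAlg_{\mathcal{A}}(\mathcal{S})$ must itself be an equivalence. The difference is only in how the detection lemma is discharged. The paper does not prove it but simply invokes Hinich--Moerdijk's reconstruction theorem \cite[Theorem 4.1.1]{HM24}, which is precisely this statement; with that citation in hand the proof is one line.

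Your proposed route to the detection lemma via envelopes and Day convolution, by contrast, is not yet a proof and has some slippage. First, the symmetric monoidal structure on $\mathcal{S}$ in the statement is the \emph{cartesian} one, not the cocartesian $\mathcal{S}^{\sqcup}$; this matters, since for cocartesian targets every object is canonically a commutative algebra and $\Fun^{\otimes}$ behaves very differently. Second, and more seriously, $\Fun^{\otimes}(\mathrm{Env}(\mathcal{O}),\mathcal{S}^{\times})$ is not a Day-convolution presheaf category from which one can read off $\mathrm{Env}(\mathcal{O})$ by Yoneda: it consists only of \emph{strong} symmetric monoidal functors, so the representables are not there, and the ``recover the envelope from its presheaves'' step has no obvious meaning. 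Making this precise is essentially the content of the Hinich--Moerdijk reconstruction theorem (which proceeds, roughly, by identifying $\qAlg_{\mathcal{O}}(\mathcal{S})$ with dendroidal left fibrations over $\mathcal{O}$ and running a Yoneda-type argument there). Your alternative suggestion via operadic straightening/unstraightening is closer to how that theorem is actually proved, but again you would be reproving \cite[Theorem 4.1.1]{HM24} rather than using it. In short: keep your reduction, replace the envelope sketch by the citation, and you have the paper's proof.
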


\begin{proof}
	The implication (1)$\Rightarrow$(2) follows from Remarks \ref{rem:loc_esssurj} and \ref{rem:operadic_loc}. For (2)$\Rightarrow$(1),
suppose that condition (2) is satisfied. Since $\gamma$ carries
each map in $S$ to an equivalence, it factors through $\mathcal{O}[S^{-1}]$,
say as $\mathcal{O}\xrightarrow{\alpha}\mathcal{O}[S^{-1}]\xrightarrow{\beta}\mathcal{P}$.
We wish to show that $\beta$ is an equivalence of Lurie quasioperads.
By hypothesis and Remark \ref{rem:operadic_loc}, the functor
\[
\beta^{*}\from\qAlg_{\mathcal{P}}\pr{\mathcal{S}}\to\qAlg_{\mathcal{O}[S^{-1}]}\pr{\mathcal{S}}
\]
is an equivalence. Moreover, since $U\pr{\gamma}$ is essentially
surjective, so is $U\pr{\beta}$. Therefore, Hinich--Moerdijk's
reconstruction theorem \cite[Theorem 4.1.1]{HM24} shows that
$\beta$ is an equivalence, as claimed.
\end{proof}

We can now give a proof of Theorem \ref{thm:ON3}:
\begin{proof}
[Proof of Theorem \ref{thm:ON3}]Let $\alpha\from\pr{\mathcal{O},S}\to\pr{\mathcal{P},T}$
be a local equivalence of relative locally Kan simplicial operads.
We must show that $\overline{N}_{\Lurie}\pr{\alpha}$ is an equivalence.
Using the fact that $\overline{N}_{\Lurie}$ preserves relative
equivalences, we may assume that $\mathcal{O}$ and $\mathcal{P}$
are $\Sigma$-cofibrant. By Proposition \ref{prop:char_loc_Kan},
the adjunction
\[
\begin{tikzcd}
	\alpha_!\from\mathscr{L}_{S}\pr{\Alg_{\mathcal{O}}\pr{\sSet}}\ar[r, shift left=1.8]\ar[r,leftarrow, shift right=1.8,"\scalebox{0.8}{$\perp$}"] & \mathscr{L}_{T}\pr{\Alg_{\mathcal{P}}\pr{\sSet}}\from \alpha^*
\end{tikzcd}
\]
is a Quillen equivalence. According to the rectification theorem
of algebras over $\infty$-operads (see \cite[Theorem 7.11]{PS18}
or \cite[Theorem 7.3.1]{WY24}), this is equivalent to saying
that precomposition with $\alpha$ induces an equivalence of $\infty$-categories
$\qAlg_{N_{\Lurie}\pr{\mathcal{P}}}^{T}\pr{\mathcal{S}}\xrightarrow{\sim}\qAlg_{N_{\Lurie}\pr{\mathcal{O}}}^{S}\pr{\mathcal{S}}$.
By Proposition \ref{prop:char_loc_Lurie}, this is equivalent
to saying that $\overline{N}_{\Lurie}\pr{\alpha}$ is a local equivalence,
and the proof is complete.
\end{proof}
\subsection*{Acknowledgment}
K.A.\ was supported by JSPS KAKENHI Grant
Number 24KJ\-1443. V.C.\ was partially supported by the project PID2024-157173NB-I00 funded by MCIN/AEI/10.13039/501100011033 and by FEDER, UE. F.P.\ was supported by the Marie Skłodowska-Curie grant agreement No 945332 and the ENW-XL grant.
\bibliographystyle{amsalpha}
\bibliography{refs}

\vspace{0.5cm}
\address{\textsc{K.A.: Department of Mathematics, Kyoto University, Kitashirakawa-Oiwakecho, 606-8502
Kyoto (Japan)}} \newline \email{\texttt{arakawa\_kensuke.22c@st.kyoto-u.ac.jp}}

\vspace{0.2cm}
\address{\textsc{V.C.: Max Planck Institute for Mathematics in the Sciences, Inselstrasse 22, 04103 Leipzig (Germany)}} \newline \email{\texttt{vcarmonamath@gmail.com}}

\vspace{0.2cm}
\address{\textsc{F.P.: Utrecht Geometry Center, Utrecht University, Hans Freudenthalgebouw
	Budapestlaan 6,	3584 CD Utrecht (The Netherlands)}} \newline \email{\texttt{ f.pratali@uu.nl}}

\end{document}